\numberwithin{algorithm}{section}
\newcommand\BibTeX{{\rmfamily B\kern-.05em \textsc{i\kern-.025em b}\kern-.08em
T\kern-.1667em\lower.7ex\hbox{E}\kern-.125emX}}
\newtheorem{thm}{Theorem}[section]
\newtheorem{rem}{Remark}[section]
\numberwithin{equation}{section}
\renewcommand{\theequation}{\thesection.\arabic{equation}}
\def\simgt{\,\hbox{\lower0.6ex\hbox{$>$}\llap{\raise0.3ex\hbox{$\sim$}}}\,}
\def\simlt{\,\hbox{\lower0.6ex\hbox{$<$}\llap{\raise0.3ex\hbox{$\sim$}}}\,}
\def\simgteq{\,\hbox{\lower0.6ex\hbox{$\ge$}\llap{\raise0.6ex\hbox{$\sim$}}}\,}
\def\simlteq{\,\hbox{\lower0.6ex\hbox{$\le$}\llap{\raise0.6ex\hbox{$\sim$}}}\,}
\def\user@resume{resume}
\def\user@intermezzo{intermezzo}
\newcounter{previousequation}
\newcounter{lastsubequation}
\newcounter{savedparentequation}
\renewenvironment{subequations}[1][]{%
      \def\user@decides{#1}%
      \setcounter{previousequation}{\value{equation}}%
      \ifx\user@decides\user@resume
           \setcounter{equation}{\value{savedparentequation}}%
      \else
      \ifx\user@decides\user@intermezzo
           \refstepcounter{equation}%
      \else
           \setcounter{lastsubequation}{0}%
           \refstepcounter{equation}%
      \fi\fi
      \protected@edef\theHparentequation{%
          \@ifundefined {theHequation}\theequation \theHequation}%
      \protected@edef\theparentequation{\theequation}%
      \setcounter{parentequation}{\value{equation}}%
      \ifx\user@decides\user@resume
           \setcounter{equation}{\value{lastsubequation}}%
         \else
           \setcounter{equation}{0}%
      \fi
      \def\theequation  {\theparentequation  \alph{equation}}%
      \def\theHequation {\theHparentequation \alph{equation}}%
      \ignorespaces
}{%
  \ifx\user@decides\user@resume
       \setcounter{lastsubequation}{\value{equation}}%
       \setcounter{equation}{\value{previousequation}}%
  \else
  \ifx\user@decides\user@intermezzo
       \setcounter{equation}{\value{parentequation}}%
  \else
       \setcounter{lastsubequation}{\value{equation}}%
       \setcounter{savedparentequation}{\value{parentequation}}%
       \setcounter{equation}{\value{parentequation}}%
  \fi\fi
  \ignorespacesafterend
}
\newcommand{\algorithmicbreak}{\textbf{break}}
\newcommand{\BREAK}{\STATE \algorithmicbreak}
\newcommand{\algorithmiccontinue}{\textbf{continue}}
\newcommand{\CONTINUE}{\STATE \algorithmiccontinue}
\begin{document}

\runningheads{Kareem T. Elgindy}{Optimization via Chebyshev Polynomials}

\title{Optimization via Chebyshev Polynomials}

\author{Kareem T. Elgindy\corrauth}

\address{Mathematics Department, Faculty of Science, Assiut University, Assiut 71516, Egypt}

\corraddr{Mathematics Department, Faculty of Science, Assiut University, Assiut 71516, Egypt}

\begin{abstract}
This paper presents for the first time a robust exact line-search method based on a full pseudospectral (PS) numerical scheme employing orthogonal polynomials. The proposed method takes on an adaptive search procedure and combines the superior accuracy of Chebyshev PS approximations with the high-order approximations obtained through Chebyshev PS differentiation matrices (CPSDMs). In addition, the method exhibits quadratic convergence rate by enforcing an adaptive Newton search iterative scheme. A rigorous error analysis of the proposed method is presented along with a detailed set of pseudocodes for the established computational algorithms. Several numerical experiments are conducted on one- and multi-dimensional optimization test problems to illustrate the advantages of the proposed strategy. 
\end{abstract}

\keywords{Adaptive; Chebyshev polynomials; Differentiation matrix; Line search; One-dimensional optimization; Pseudospectral method.}

\maketitle

\vspace{-6pt}

\section{Introduction}
\label{int}
The area of optimization received enormous attention in recent years due to the rapid progress in computer technology, development of user-friendly software, the advances in scientific computing provided, and most of all, the remarkable interference of mathematical programming in crucial decision-making problems. One-dimensional optimization or simply line search optimization is a branch of optimization that is most indispensable, as it forms the backbone of nonlinear programming algorithms. In particular, it is typical to perform line search optimization in each stage of multivariate algorithms to determine the best length along a certain search direction; thus, the efficiency of multivariate algorithms largely depends on it. Even in constructing high-order numerical quadratures, line search optimization emerges in minimizing their truncation errors; thus boosting their accuracy and allowing to obtain very accurate solutions to intricate boundary-value problems, integral equations, integro-differential equations, and optimal control problems in short times via stable and efficient numerical schemes; cf. \cite{Elgindy2012c,Elgindy2013a,Elgindy2012d,Elgindy2013,Elgindy2016b,Elgindy2015}. 

Many line search methods were presented in the literature in the past decades. Some of the most popular line search methods include interpolation methods, Fibonacci's method, golden section search method, secant method, Newton's method, to mention a few; cf. \cite{Pedregal2006,Chong2013,Nocedal2006}. Perhaps Brent's method is considered one of the most popular and widely used line search methods nowadays. It is a robust version of the inverse parabolic interpolation method that makes the best use of both techniques, the inverse parabolic interpolation method and the golden section search method, and can be implemented in MATLAB software, for instance, using the `fminbnd' optimization solver. The method is a robust optimization algorithm that does not require derivatives; yet it lacks the rapid convergence rate manifested in the derivative methods when they generally converge. 

In $2008$, \cite{Elgindy2008a} gave a new approach for constructing an exact line search method using Chebyshev polynomials, which have become increasingly important in scientific computing, from both theoretical and practical points of view. They introduced a fast line search method that is an adaptive version of Newton's method. In particular, their method forces Newton's iterative scheme to progress only in descent directions. Moreover, the method replaces the classical finite-difference formulas for approximating the derivatives of the objective function by the more accurate step lengths along the Chebyshev pseudospectral (PS) differentiation matrices (CPSDMs); thus getting rid of the dependency of the iterative scheme on the choice of the step-size, which can significantly affect the quality of calculated derivatives approximations. Although the method worked quite well on some test functions, where classical Newton's method fail to converge, the method may still suffer from some drawbacks that we discuss thoroughly later in the next section. 

In this article, the question of how to construct an exact line search method based on PS methods is re-investigated and explored. We present for the first time a robust exact line-search method based on a full PS numerical scheme employing Chebyshev polynomials and adopting two global strategies: (i) Approximating the objective function by an accurate fourth-order Chebyshev interpolant based on Chebyshev-Gauss-Lobatto (CGL) points; (ii) approximating the derivatives of the function using CPSDMs. The first strategy actually plays a significant role in capturing a close profile to the objective function from which we can determine a close initial guess to the local minimum, since Chebyshev polynomials as basis functions can represent smooth functions to arbitrarily high accuracy by retaining a finite number of terms. This approach also improves the performance of the adaptive Newton's iterative scheme by starting from a sufficiently close estimate; thus moving in a quadratic convergence rate. The second strategy yields accurate search directions by maintaining very accurate derivative approximations. The proposed method is also adaptive in the sense of searching only along descent directions, and avoids the raised drawbacks pertaining to the \cite{Elgindy2008a} method. 

The rest of the article is organized as follows: In the next section, we revisit the \cite{Elgindy2008a} method highlighting its strengths aspects and weaknesses. In Section \ref{sec:TSCPDM1}, we provide a modified explicit expression for higher-order CPSDMs based on the successive differentiation of the Chebyshev interpolant. We present the novel line search strategy in Section \ref{sec:PLSM} using first-order and/or second-order information, and discuss its integration with multivariate nonlinear optimization algorithms in Section \ref{eq:IWMNOA1}. Furthermore, we provide a rigorous error and sensitivity analysis in Sections \ref{sec:EA1} and \ref{sec:SA1}, respectively. Section \ref{sec:numex1} verifies the accuracy and efficiency of the proposed method through an extensive sets of one- and multi-dimensional optimization test examples followed by some conclusions given in Section \ref{Conc}. Some useful background in addition to some useful pseudocodes for implementing the developed method are presented in Appendices \ref{sec:P1} and \ref{appendix:PS1}, respectively.
\section{The \texorpdfstring{\cite{Elgindy2008a}}{Elgindy and Hedar (2008)} Optimization Method Revisited}
\label{sec:TEAHOMR}
The \cite{Elgindy2008a} line search method is an adaptive method that is considered an improvement over the standard Newton's method and the secant method by forcing the iterative scheme to move in descent directions. The developed algorithm is considered unique as it exploits the peculiar convergence properties of spectral methods, the robustness of orthogonal polynomials, and the concept of PS differentiation matrices for the first time in a one-dimensional search optimization. In addition, the method avoids the use of classical finite difference formulas for approximating the derivatives of the objective function, which are often sensitive to the values of the step-size. 

The success in carrying out the above technique lies very much in the linearity property of the interpolation, differentiation, and evaluation operations. Indeed, since all of these operations are linear, the process of obtaining approximations to the values of the derivative of a function at the interpolation points can be expressed as a matrix-vector multiplication. In particular, if we approximate the derivatives of a function $f(x)$ by interpolating the function with an $n$th-degree polynomial $P_n(x)$ at, say, the CGL points defined by Eq. \eqref{eq:app:CGL1}, then the values of the derivative $P'_n(x)$ at the same $(n + 1)$ points can be expressed as a fixed linear combination of the given function values, and the whole relationship can be written in the following matrix form:
\begin{equation}\label{eq:dmat}
\left( {\begin{array}{*{20}{l}}
{{P'_n}({x_0})}\\
{\,\,\quad  \vdots }\\
{{P'_n}({x_n})}
\end{array}} \right) = \left( {\begin{array}{*{20}{c}}
{d_{00}^{(1)}}& \ldots &{d_{0n}^{(1)}}\\
 \vdots & \ddots & \vdots \\
{d_{n0}^{(1)}}& \cdots &{d_{nn}^{(1)}}
\end{array}} \right)\left( {\begin{array}{*{20}{l}}
{f({x_0})}\\
{\quad \, \vdots }\\
{f({x_n})}
\end{array}} \right).
\end{equation}
Setting $\bm{\mathcal{F}} = [f(x_0 ),f(x_1 ), \ldots ,f(x_n )]^T$, as the vector consisting of values of $f(x)$ at the $(n + 1)$ interpolation points, $\bm{\Psi} = [P'_n(x_0 ),P'_n(x_1 ), \ldots ,P'_n(x_n )]^T$, as the values of the derivative at the CGL points, and $\bm{D}^{(1)} = (d_{ij}^{(1)})\;,\;0 \le i,j \le n$, as the first-order differentiation matrix mapping $\bm{\mathcal{F}} \to \bm{\Psi}$, then Formula \eqref{eq:dmat} can be written in the following simple form
\begin{equation}\label{eq:dmat2}
    \bm{\Psi} = \bm{D}^{(1)} \bm{\mathcal{F}}.
\end{equation}
Eq. \eqref{eq:dmat2} is generally known as the PS differentiation rule, and it generally delivers better accuracy than standard finite-difference rules. We show later in the next section some modified formulas for calculating the elements of a general $q$th-order CPSDM, $\bm{D}^{(q)}\; \forall q \ge 1$. 
Further information on classical explicit expressions of the entries of differentiation matrices can be found in \cite{Weideman2000,Baltensperger2000,Costa2000,Elbarbary2005}.

A schematic figure showing the framework of \cite{Elgindy2008a} line search method is shown in Figure \ref{fig:ls2}. As can be observed from the figure, the method starts by transforming the uncertainty interval $[a,b]$ into the domain $[-1,1]$ to exploit the rapid convergence properties provided by Chebyshev polynomials. The method then takes on a global approach in calculating the derivatives of the function at the CGL points, $\{x_i\}_{i=0}^n$, using CPSDMs. If the optimality conditions are not satisfied at any of these candidate points, the method then endeavors to locate the best point $x_m$ that minimizes the function, and decides whether to keep or expand the uncertainty interval. An update $\tilde x_m$ is then calculated using the descent direction property followed by updating row $m$ in both matrices $\bm{D}^{(1)}$ and $\bm{D}^{(2)}$. The iterative scheme proceeds repeatedly until the stopping criterion is satisfied.

Although the method possesses many useful features over classical Newton's method and the secant method; cf. \cite[Section 8]{Elgindy2008a}, it may still suffer from the following drawbacks: (i) In spite of the fact that, $\bm{D}^{(1)}$ and $\bm{D}^{(2)}$ are constant matrices, the method requires the calculation of their entire elements beforehand. We show later that we can establish rapid convergence rates using only one row from each matrix in each iterate. (ii) The method attempts to calculate the first and second derivatives of the function at each point $x_i \in [-1, 1], i =0, \ldots, n$. This could be expensive for large values of $n$, especially if the local minimum $t^*$ is located outside the initial uncertainty interval $[a, b]$; cf. \cite[Table 6]{Elgindy2008a}, for instance. (iii) The method takes on a global approach for approximating the derivatives of the objective function using CPSDMs instead of the usual finite-difference formulas that are highly sensitive to the values of the step-size. However, the method locates a starting approximation by distributing the CGL points along the interval $[-1,1]$, and finding the point that best minimizes the value of the function among all other candidate points. We show later in Section \ref{sec:PLSM} that we can significantly speed up this process by adopting another global approach based on approximating the function via a fourth-order Chebyshev interpolant, and determining the starting approximation through finding the best root of the interpolant derivative using exact formulas. (iv) During the implementation of the method, the new approximation to the local minimum, $\tilde x_m$, may lie outside the interval $[-1,1]$ in some unpleasant occasions; thus the updated differentiation matrices may produce false approximations to the derivatives of the objective function. (v) To maintain the adaptivity of the method, the authors proposed to flip the sign of the second derivative $f''$ whenever $f'' < 0$, at some point followed by its multiplication with a random positive number $\beta$. This operation may not be convenient in practice; therefore, we need a more efficient approach to overcome this difficulty. (vi) Even if $f'' > 0$, at some point, the magnitudes of $f'$ and $f''$ may be too small slowing down the convergence rate of the method. This could happen for instance if the function has a multiple local minimum, or has a nearly flat profile about $t^*$. (vii) Suppose that $t^*$ belongs to one side of the real line while the initial search interval $[a, b]$ is on the other side. According to the presented method, the search interval must shrink until it converges to the point zero, and the search procedure halts. Such drawbacks motivate us to develop a more robust and efficient line search method. 
\begin{figure}[ht]
\centering
\includegraphics[scale=0.45]{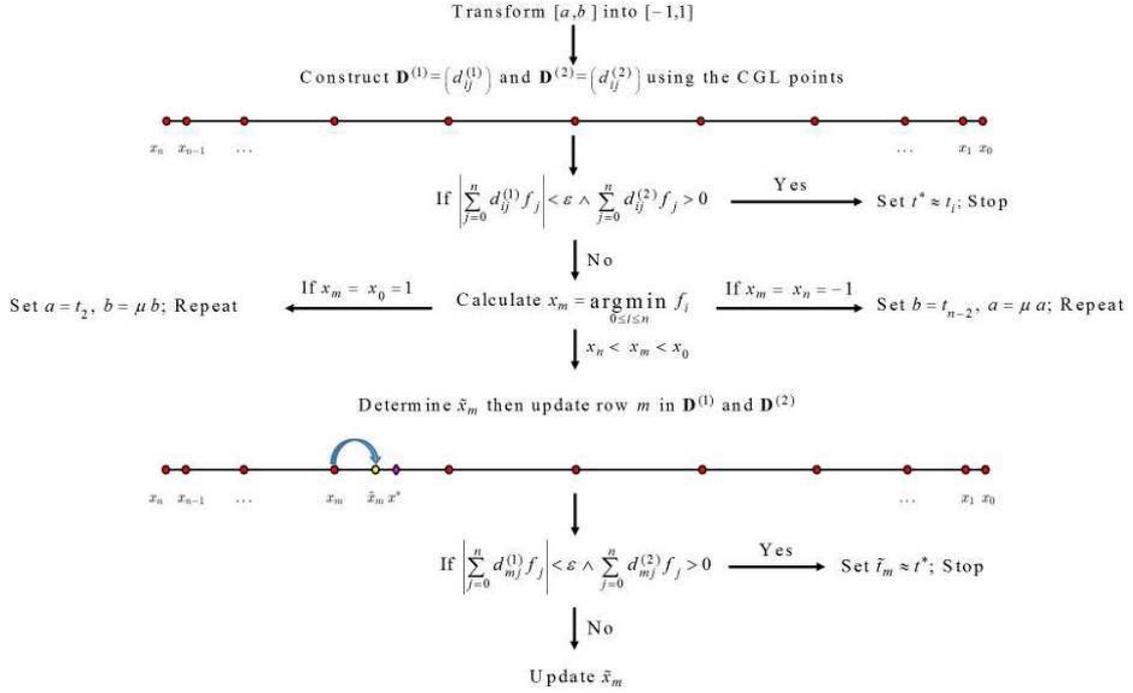}\\
\caption{An illustrative figure showing the framework of the line search method introduced by \cite{Elgindy2008a} for minimizing a single-variable function $f: [a, b] \to \mathbb{R}$, where $\{x_i\}_{i=0}^n$ are the CGL points, $t_i = \left((b - a) x_i + a + b\right)/2\; \forall i$, are the corresponding points in $[a, b]$, $f_i = f(t_i)\; \forall i$, $\varepsilon$ is a relatively small positive number, $\mu > 1$, is a parameter preferably chosen as $1.618^k$, where $\rho \approx 1.618$ is the golden ratio, and $k = 1,2, \ldots$, is the iteration counter}
\label{fig:ls2}
\end{figure}
\section{High-Order CPSDMs}
\label{sec:TSCPDM1}
In the sequel, we derive a modified explicit expression for higher-order CPSDMs to that stated in \cite{Elgindy2008a} based on the successive differentiation of the Chebyshev interpolant. The higher derivatives of Chebyshev polynomials are expressed in standard polynomial form rather than as Chebyshev polynomial series expansion. The relation between Chebyshev polynomials and trigonometric functions is used to simplify the expressions obtained, and the periodicity of the cosine function is used to express it in terms of existing nodes. 

The following theorem gives rise to a modified useful form for evaluating the $m$th-derivative of Chebyshev polynomials.

\begin{thm}\label{thm1}
The $m$th-derivative of the Chebyshev polynomials is given by
\begin{subequations}
\begin{empheq}[left={T_k^{(m)}(x) =}\empheqlbrace]{align}
&0,\quad 0 \le k < m,\label{eq:dercheb5}\\
&1,\quad k = m = 0,\label{eq:dercheb3}\\
&{2^{k - 1}}m!,\quad k = m \ge 1,\label{eq:dercheb4}\\
&\sum\limits_{l = 0}^{\left\lfloor k/2 \right\rfloor} {\gamma _{l,k}^{(m)}\,c_l^{(k)}{x^{k - 2l - m}}} ,\quad k > m \ge 0 \wedge x \ne 0,\label{eq:dercheb1}\\
&\beta _k^{(m)}\cos \left( {\frac{\pi }{2}\left( {k - {\delta _{\frac{{m + 1}}{2},\left\lfloor {\frac{{m + 1}}{2}} \right\rfloor }}} \right)} \right),\quad k > m \ge 0 \wedge x = 0,\label{eq:dercheb2}
\end{empheq}
\end{subequations}
where
\begin{subequations}
\begin{empheq}[left={\gamma _{l,k}^{(m)} =}\empheqlbrace]{align}
&1,\quad m = 0,\\
&\left( {k - 2l - m + 1} \right){\left( {k - 2l - m + 2} \right)_{m - 1}},\quad m \ge 1,
\end{empheq}
\end{subequations}
\begin{subequations}\label{eq:clkmain1}
\begin{empheq}[left={c_l^{(k)} =}\empheqlbrace]{align}
&1,\quad l = k = 0,\\
&{2^{k - 1}},\quad l = 0 \wedge k \ge 1,\\
&- \frac{{\left( {k - 2l + 1} \right)\left( {k - 2l + 2} \right)}}{{4l\left( {k - l} \right)}} c_{l - 1}^{(k)},\quad l = 1, \ldots ,\left\lfloor {k/2} \right\rfloor  \wedge k \ge 1,
\end{empheq}
\end{subequations}
\small{
\begin{subequations}
\begin{empheq}[left={\beta _k^{(m)} =}\empheqlbrace]{align}
&1,\quad m = 0,\\
&{\left( { - 4} \right)^{\left\lfloor {\frac{{m - 1}}{2}} \right\rfloor }}{\left( { - 1} \right)^{{\delta _{\frac{{m + 1}}{2},\left\lfloor {\frac{{m + 1}}{2}} \right\rfloor }} + \left\lfloor {\frac{{m + 1}}{2}} \right\rfloor }}{k^{{\delta _{\frac{m}{2},\left\lfloor {\frac{m}{2}} \right\rfloor }} + 1}}{\left( {\frac{1}{2}\left( { - k - {\delta _{\frac{{m + 1}}{2},\left\lfloor {\frac{{m + 1}}{2}} \right\rfloor }} + 2} \right)} \right)_{\left\lfloor {\frac{{m - 1}}{2}} \right\rfloor }} \times \nonumber\\
&{\left( {\frac{1}{2}\left( {k - {\delta _{\frac{{m + 1}}{2},\left\lfloor {\frac{{m + 1}}{2}} \right\rfloor }} + 2} \right)} \right)_{\left\lfloor {\frac{{m - 1}}{2}} \right\rfloor }},\quad m \ge 1,
\end{empheq}
\end{subequations}
}
$\left\lfloor x \right\rfloor$ is the floor function of a real number $x$; ${(x)_l} = x (x + 1)  \ldots  (x + l - 1)$ is the Pochhammer symbol, for all $l \in \mathbb{Z}^+$.
\end{thm}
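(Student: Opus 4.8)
The plan is to reduce every case to the classical power-form expansion of the Chebyshev polynomials, treating the value at $x=0$ separately via the trigonometric representation. The first step is to check that the numbers $c_l^{(k)}$ defined by the recursion \eqref{eq:clkmain1} have the closed form $c_l^{(k)}=\frac{k}{2}\,\frac{(-1)^l}{k-l}\binom{k-l}{l}\,2^{k-2l}$ for $k\ge1$, with $c_0^{(0)}=1$ --- a one-line induction on $l$ using the ratio in \eqref{eq:clkmain1} --- so that $T_k(x)=\sum_{l=0}^{\lfloor k/2\rfloor}c_l^{(k)}x^{k-2l}$ is precisely the standard power-form expansion of $T_k$. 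Granting this, \eqref{eq:dercheb5}, \eqref{eq:dercheb3} and \eqref{eq:dercheb4} are immediate: $T_0\equiv1$ and $\deg T_k=k$ with leading coefficient $2^{k-1}$ for $k\ge1$, whence $T_k^{(m)}\equiv0$ when $k<m$, $T_0^{(0)}=1$, and $T_m^{(m)}=m!\,2^{m-1}$.

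For the generic case $k>m\ge0$ with $x\ne0$, I would differentiate the expansion $m$ times term by term. For every $l$ with $k-2l\ge m$ one has $\frac{d^m}{dx^m}x^{k-2l}=(k-2l-m+1)(k-2l-m+2)_{m-1}\,x^{k-2l-m}$, and the coefficient on the right is exactly $\gamma_{l,k}^{(m)}$. Whenever instead $0\le k-2l\le m-1$, the product $(k-2l-m+1)(k-2l-m+2)_{m-1}$ is a run of $m$ consecutive integers containing $0$, so $\gamma_{l,k}^{(m)}=0$ and the corresponding terms disappear, exactly as they should. Hence the sum may be kept over the full range $l=0,\dots,\lfloor k/2\rfloor$ as in \eqref{eq:dercheb1} (the case $m=0$ being the expansion itself, consistent with $\gamma_{l,k}^{(0)}=1$); the hypothesis $x\ne0$ is what renders the suppressed negative-power terms unambiguously zero.

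For $k>m\ge0$ with $x=0$ I would not take a limit in \eqref{eq:dercheb1} but work from $T_k(\cos\theta)=\cos k\theta$. Differentiating the Chebyshev differential equation $(1-x^2)T_k''-xT_k'+k^2T_k=0$ exactly $m$ times by the Leibniz rule and setting $x=0$ gives the two-step recursion $T_k^{(m+2)}(0)=(m^2-k^2)\,T_k^{(m)}(0)$, with seeds $T_k(0)=\cos(k\pi/2)$ and $T_k'(0)=k\sin(k\pi/2)$ read off directly from $T_k(\cos\theta)=\cos k\theta$. Iterating this recursion separately for even and odd $m$ writes $T_k^{(m)}(0)$ as $\cos(k\pi/2)$, resp.\ $\sin(k\pi/2)=\cos\!\big(\tfrac{\pi}{2}(k-1)\big)$, multiplied by a product of factors in arithmetic progression, $\prod\big((2i)^2-k^2\big)$ resp.\ $\prod\big((2i+1)^2-k^2\big)$; writing each factor as $-(k-2i)(k+2i)$ resp.\ $-(k-2i-1)(k+2i+1)$ and collecting, this product becomes a power of $-4$ times two Pochhammer symbols of half-integer argument times a residual power of $k$. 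Encoding the parity of $m$ through $\delta_{\frac{m+1}{2},\lfloor\frac{m+1}{2}\rfloor}$ (equal to $1$ precisely when $m$ is odd) and the number of iterations through $\lfloor\frac{m-1}{2}\rfloor$ then repackages the result as $\beta_k^{(m)}\cos\!\big(\tfrac{\pi}{2}(k-\delta_{\frac{m+1}{2},\lfloor\frac{m+1}{2}\rfloor})\big)$, which is \eqref{eq:dercheb2}; the single cosine factor also automatically delivers $0$ whenever $k-m$ is odd.

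I expect this last step to be the main obstacle. Matching the iterated recursion to the closed form \eqref{eq:dercheb2} requires carrying the signs correctly through all four parity combinations of $(k,m)$, rewriting the two progression products as Pochhammer symbols with the exact index $\lfloor\frac{m-1}{2}\rfloor$, identifying the overall powers of $-4$ and of $k$, and checking the boundary situations (the seeds $m=0,1$, and the vanishing when $k-m$ is odd). The remaining ingredients --- the power-form expansion, its term-by-term differentiation, and the degenerate cases $k\le m$ --- are routine.
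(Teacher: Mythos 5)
Your plan is sound and, for the degenerate cases and the generic $x\ne 0$ case, essentially matches the paper: the paper also first recasts $T_k$ as $\sum_{l}c_l^{(k)}x^{k-2l}$ and then obtains \eqref{eq:dercheb1} by induction on $m$, whose inductive step is exactly the single term-by-term differentiation $\gamma_{l,k}^{(n+1)}=(k-2l-n)\gamma_{l,k}^{(n)}$ that you perform $m$ times at once. Your explicit remark that $\gamma_{l,k}^{(m)}$ vanishes whenever $0\le k-2l\le m-1$ (so the fully differentiated terms drop out and the sum may be kept over all $l$) is a point the paper leaves implicit, and is worth having. Where you genuinely diverge is the case $x=0$. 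The paper proves \eqref{eq:dercheb2} by induction on $m$ via $T_k'=\tfrac{k}{2}\,\phi\psi$ with $\phi=T_{k-1}-T_{k+1}$, $\psi=1/(1-x^2)$, applying the general Leibniz rule and the values $\psi^{(n)}(0)$; the inductive step then requires collapsing a sum over even $l$ of $\binom{n}{l}$ times $\beta_{k\pm1}^{(n-l)}$ terms into the single quantity $\beta_k^{(n+1)}$, a combinatorial identity the paper asserts in one line without detail. Your route instead differentiates the Chebyshev differential equation $m$ times to get the two-term recursion $T_k^{(m+2)}(0)=(m^2-k^2)\,T_k^{(m)}(0)$ with the seeds $T_k(0)=\cos(k\pi/2)$, $T_k'(0)=k\sin(k\pi/2)$; this is correct (I verified the recursion and checked that iterating it reproduces $\beta_k^{(m)}$ for even $m$), and each iteration step multiplies by a single scalar factor, so the only remaining work is the bookkeeping of signs, the powers of $-4$ and $k$, and the conversion of the progression products into Pochhammer symbols --- exactly the obstacle you flag. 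On balance your argument for \eqref{eq:dercheb2} is more elementary and more transparent than the paper's, since it avoids the unproved collapsing identity; the paper's approach has the minor advantage of reusing the same Leibniz-rule machinery it already set up and of staying entirely within identities among the $\beta_k^{(m)}$ themselves.
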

\begin{proof}
The proof of Eqs. \eqref{eq:dercheb5}-\eqref{eq:dercheb4} is straightforward using Eqs. \eqref{eq:chebpoly1}-\eqref{eq:chebpolycoeff1}. We prove Eqs. \eqref{eq:dercheb1} and \eqref{eq:dercheb2} by mathematical induction. So consider the case where $k > m \ge 0 \wedge x \ne 0$. For $m = 0$, we can rewrite Eq. \eqref{eq:chebpoly12} with a bit of manipulation in the form
\begin{equation}
{T_k}(x) = \sum\limits_{l = 0}^{\left\lfloor {k/2} \right\rfloor } {c_l^{(k)}{x^{k - 2l}}}.
\end{equation}
For $m=1$, we have 
\[\sum\limits_{l = 0}^{\left\lfloor {k/2} \right\rfloor } {\gamma _{l,k}^{(1)}{\mkern 1mu} {\kern 1pt} c_l^{(k)}{x^{k - 2l - 1}}}  = \sum\limits_{l = 0}^{\left\lfloor {k/2} \right\rfloor } {(k - 2l)c_l^{(k)}{x^{k - 2l - 1}}}  = \frac{d}{{dx}}\sum\limits_{l = 0}^{\left\lfloor {k/2} \right\rfloor } {c_l^{(k)}{x^{k - 2l}}}  = {T'_k}(x),\]
so the theorem is true for $m = 0$ and $1$. Now, assume that the theorem is true for $m = n$, then for $m = n + 1$, we have
\[\sum\limits_{l = 0}^{\left\lfloor {k/2} \right\rfloor } {\gamma _{l,k}^{(n + 1)}\,c_l^{(k)}{x^{k - 2l - n - 1}}}  = \sum\limits_{l = 0}^{\left\lfloor {k/2} \right\rfloor } {(k - 2l - n)\,\gamma _{l,k}^{(n)}\,c_l^{(k)}{x^{k - 2l - n - 1}}}  = \frac{d}{{dx}}\sum\limits_{l = 0}^{\left\lfloor {k/2} \right\rfloor } {\gamma _{l,k}^{(n)}\,c_l^{(k)}{x^{k - 2l - n}}}  = T_k^{(n + 1)}(x).\]
Hence Eq. \eqref{eq:dercheb1} is true for every positive integer $m$. Now consider the case $k > m \ge 0 \wedge x = 0$. The proof of Eq. \eqref{eq:dercheb2} for $m = 0$ is trivial, so consider the case $m = 1$, where the proof is derived as follows:
\[\beta _k^{(1)}\cos \left( {\frac{\pi }{2}(k - 1)} \right) = {( - 1)^2}k\cos \left( {\frac{\pi }{2}(k - 1)} \right) = k\sin \left( {\frac{{k\pi }}{2}} \right) = {T'_k}(0).\]
Now assume that Eq. \eqref{eq:dercheb2} is true for $m = n$. We show that it is also true for $m = n + 1$. Since
\begin{equation}\label{chebp1}
    T_k^{(n + 1)}(x) = \frac{{{d^n}}}{{d{x^n}}}{{T'}_k}(x),
\end{equation}
then substituting Eq. \eqref{eq:chebp2} in Eq. \eqref{chebp1} yields
\begin{equation}\label{eq:chebp3}
T_k^{(n + 1)}(x) = \frac{k}{2}\frac{{{d^n}}}{{d{x^n}}}(\phi (x)\;\psi (x)),
\end{equation}
where $\phi (x) = {T_{k - 1}}(x) - {T_{k + 1}}(x)\;;\;\psi (x) = 1/(1 - {x^2})$. Since
\begin{equation}\label{eq:cheb5}
{\psi ^{(n)}}(0) = \left\{ {\begin{array}{*{20}{l}}
{0,\quad n\;{\text{is odd}},}\\
{n!,\quad n\;{\text{is even}},}
\end{array}} \right.
\end{equation}
then by the general Leibniz rule,
\begin{equation}\label{chebp4}
{(\phi (x) \cdot \psi (x))^{(n)}} = \sum\limits_{k = 0}^n {\left( \begin{array}{c}
n\\
k
\end{array} \right)} \,{\phi ^{(n - k)}}(x)\,{\psi ^{(k)}}(x),
\end{equation}
Eq. \eqref{eq:chebp3} at $x=0$ is reduced to
\begin{align*}
	T_k^{(n + 1)}(0) &= \frac{k}{2}\sum\limits_{\scriptstyle l = 0\atop
	\scriptstyle l\;{\text{is even}}}^n {\left( {\begin{array}{*{20}{l}}
	n\\
	l
	\end{array}} \right)\,{\phi ^{(n - l)}}(0)\,{\psi ^{(l)}}(0)}  = \frac{1}{2}\, k \cdot n!\;\sum\limits_{\scriptstyle l = 0 \atop
	\scriptstyle l\;{\text{is even}}}^n {\frac{{{\phi ^{(n - l)}}(0)}}{{(n - l)!}}}  = \frac{1}{2}\, k \cdot n!\;\sum\limits_{\scriptstyle l = 0 \atop
	\scriptstyle l\;{\text{is even}}}^n {\frac{1}{{(n - l)!}}\left( {T_{k - 1}^{(n - l)}(0) - T_{k + 1}^{(n - l)}(0)} \right)}\\
	&= \frac{1}{2}\,k \cdot n!\;\sum\limits_{\scriptstyle l = 0 \atop
\scriptstyle l\;{\text{is even}}}^n {\frac{1}{{(n - l)!}}\left( {\beta _{k - 1}^{(n - l)}\cos \left( {\frac{\pi }{2}\left( {k - 1 - {\delta _{\frac{{n - l + 1}}{2},\left\lfloor {\frac{{n - l + 1}}{2}} \right\rfloor }}} \right)} \right) - \beta _{k + 1}^{(n - l)}\cos \left( {\frac{\pi }{2}\left( {k + 1 - {\delta _{\frac{{n - l + 1}}{2},\left\lfloor {\frac{{n - l + 1}}{2}} \right\rfloor }}} \right)} \right)} \right)}\\
& = \beta _k^{(n + 1)}\cos \left( {\frac{\pi }{2}\left( {k - {\delta _{\frac{{n + 2}}{2},[\frac{{n + 2}}{2}]}}} \right)} \right),
\end{align*}
which completes the proof.
\end{proof}
Introducing the parameters, 
\begin{equation}\label{eq:paramtheta1}
	{\theta _j} = \left\{ \begin{array}{l}
	1/2,\quad j = 0,n,\\
	1,\quad j = 1, \ldots ,n - 1,
	\end{array} \right.
\end{equation}
replaces formulas (\ref{eq:clenshaw1}) and (\ref{coeff}) with
\begin{equation}\label{clenshawmod}
    P_n(x) = \sum\limits_{k = 0}^n {\theta _k a_k T_k (x)},
\end{equation}
\begin{equation}\label{eq:coeff2}
a_k  = \frac{2}{n}\sum\limits_{j = 0}^n {\theta _j f_j T_k (x_j )},
\end{equation}
where $f_j = {f({x_j})}\; \forall j$. Substituting Eq. \eqref{eq:coeff2} into Eq. \eqref{clenshawmod} yields
\begin{equation}\label{eq:inter}
P_n(x) = \frac{2}{n} \sum\limits_{k = 0}^n {\sum\limits_{j = 0}^n {\theta _j \theta _k f_j T_k (x_j ) T_k (x)} }.
\end{equation}

The derivatives of the Chebyshev interpolant $P_n (x)$ of any order $m$ are then computed at the CGL points by differentiating \eqref{eq:inter} such that
 \begin{equation}
	 P_n^{(m)} (x_i ) = \frac{2}{n} \sum\limits_{j = 0}^n \sum\limits_{k = 0}^n \theta _j \theta _k f_j T_k (x_j )T_k^{(m)} (x_i )  = \sum\limits_{j = 0}^n {d_{ij}^{(m)} f_j }, \quad m \ge 0,
\end{equation}
where 
\begin{equation}
	d_{ij}^{(m)}  = {\frac{2 \theta _j}{n} \sum\limits_{k = 0}^n \theta _k T_k (x_j ) T_k^{(m)} (x_i )},
\end{equation}
are the elements of the $m$th-order CPSDM. With the aid of Theorem \ref{thm1}, we can now calculate the elements of the $m$th-order CPSDM using the following useful formula:

\begin{subequations}\label{eq:CPSDMAT1}
\begin{align}
	d_{ij}^{(m)} &= \frac{2 {\theta _j}}{n}\mathop \sum \limits_{k = m}^n {\theta _k}{T_k}({x_j})\left( {\left\{ \begin{array}{l}
	1,\quad k = m = 0,\\
	{2^{k - 1}}m!,\quad k = m \ge 1,\\
	\mathop \sum \limits_{l = 0}^{\left\lfloor {k/2} \right\rfloor } c_l^{(k)}\,\gamma _{l,k}^{(m)}\,x_i^{k - 2l - m},\quad k > m \ge 0 \wedge {x_i} \ne 0,\\
	\beta _k^{(m)}\cos \left( {\frac{\pi }{2}\left( {k - {\delta _{\frac{{m + 1}}{2}, \lfloor \frac{{m + 1}}{2} \rfloor }}} \right)} \right),\quad k > m \ge 0 \wedge {x_i} = 0
	\end{array} \right.} \right)\\
	&= \frac{2 {\theta _j}}{n}\mathop \sum \limits_{k = m}^n {\theta _k}{T_k}({x_j})\left( {\left\{ \begin{array}{l}
	1,\quad k = m = 0,\\
	{2^{k - 1}}m!,\quad k = m \ge 1,\\
	\mathop \sum \limits_{l = 0}^{\left\lfloor {k/2} \right\rfloor } c_l^{(k)}\,\gamma _{l,k}^{(m)}\,x_i^{k - 2l - m},\quad k > m \ge 0 \wedge i \ne \frac{n}{2},\\
	\beta _k^{(m)}\cos \left( {\frac{\pi }{2}\left( {k - {\delta _{\frac{{m + 1}}{2}, \lfloor \frac{{m + 1}}{2} \rfloor }}} \right)} \right),\quad k > m \ge 0 \wedge i = \frac{n}{2}
	\end{array} \right.} \right).\label{eq:CPSDMAT1b}
\end{align}\end{subequations}

Using the following periodic property of the cosine function
\begin{equation}
	\cos \left( {\pi x} \right) = {\left( { - 1} \right)^{ \lfloor x \rfloor }}\cos \left( {\pi \left( {x -  \lfloor x \rfloor } \right)} \right)\;\forall x \in \mathbb{R},
\end{equation}
we can further rewrite Eqs. \eqref{eq:CPSDMAT1b} as follows:
\begin{equation}\label{eq:finaldiffm1}
	d_{ij}^{(m)} = \frac{2 {\theta _j}}{n}\mathop \sum \limits_{k = m}^n {\theta _k}{\left( { - 1} \right)^{ \lfloor jk/n \rfloor }}{x_{jk - n{\kern 1pt} \left\lfloor {\frac{{jk}}{n}} \right\rfloor }}\left( {\left\{ \begin{array}{l}
	1,\quad k = m = 0,\\
	{2^{k - 1}}m!,\quad k = m \ge 1,\\
	\mathop \sum \limits_{l = 0}^{ \lfloor k/2 \rfloor } c_l^{(k)}\gamma _{l,k}^{(m)}x_i^{k - 2l - m},\quad k > m \ge 0 \wedge i \ne \frac{n}{2},\\
	{\left( { - 1} \right)^{\lfloor \sigma _k^{(m)} \rfloor}}\beta _k^{(m)}{x_{n\left( {\sigma _k^{(m)} -  \lfloor \sigma _k^{(m)} \rfloor } \right)}},\quad k > m \ge 0 \wedge i = \frac{n}{2}
	\end{array} \right.} \right),
\end{equation}
where $\sigma _k^{(m)} = \left( {k - {\delta _{\frac{{1 + m}}{2},\left\lfloor {\frac{{1 + m}}{2}} \right\rfloor }}} \right)/2$. To improve the accuracy of Eqs. \eqref{eq:finaldiffm1}, we can use the negative sum trick, computing all the off-diagonal elements then applying the formula
\begin{equation}\label{eq:sum}
    d_{ii}^{(m)}  =  - \sum\limits_{\scriptstyle j = 0 \hfill \atop
  \scriptstyle j \ne i \hfill}^n {d_{ij}^{(m)} }\quad \forall m \ge 1,
\end{equation}
to compute the diagonal elements. Applying the last trick gives the formula
\begin{equation}\label{eq:sumtrick}
d_{ij}^{(m)} = \left\{ \begin{array}{l}
\frac{{2{\theta _j}}}{n}\sum\limits_{k = m}^n {{\theta _k}} {\left( { - 1} \right)^{ \lfloor jk/n \rfloor }}{x_{jk - n\,\left\lfloor {\frac{{jk}}{n}} \right\rfloor }}\left( {\left\{ \begin{array}{l}
1,\quad k = m = 0 \wedge i \ne j,\\
{2^{k - 1}}m!,\quad k = m \ge 1 \wedge i \ne j,\\
\sum\limits_{l = 0}^{ \lfloor k/2 \rfloor } {c_l^{(k)}} \gamma _{l,k}^{(m)}x_i^{k - 2l - m},\quad k > m \ge 0 \wedge i \ne \frac{n}{2} \wedge i \ne j,\\
{\left( { - 1} \right)^{ \lfloor \sigma _k^{(m)} \rfloor }}\beta _k^{(m)}{x_{n\left( {\sigma _k^{(m)} -  \lfloor \sigma _k^{(m)} \rfloor } \right)}},\quad k > m \ge 0 \wedge i = \frac{n}{2} \wedge i \ne j,
\end{array} \right.} \right)\\
 - \sum\nolimits_{\scriptstyle s = 0\hfill\atop
\scriptstyle s \ne i\hfill}^n {d_{is}^{(m)}},\quad i = j
\end{array} \right..
\end{equation}
Hence, the elements of the first- and second-order CPSDMs are given by
\begin{equation}\label{eq:for31}
d_{ij}^{(1)} = \left\{ \begin{array}{l}
\frac{{2{\theta _j}}}{n}\sum\limits_{k = 1}^n {{\theta _k}} {\left( { - 1} \right)^{ \lfloor jk/n \rfloor }}{x_{jk - n\,\left\lfloor {\frac{{jk}}{n}} \right\rfloor }}\left( {\left\{ \begin{array}{l}
1,\quad k = 1 \wedge i \ne j,\\
\sum\limits_{l = 0}^{ \lfloor k/2 \rfloor } {\left( {k - 2l} \right)} \,c_l^{(k)}x_i^{k - 2l - 1},\quad k > 1 \wedge i \ne \frac{n}{2} \wedge i \ne j,\\
{\left( { - 1} \right)^{ \lfloor \frac{{k - 1}}{2} \rfloor }}k{\mkern 1mu} {x_{n\left( {\frac{{k - 1}}{2} -  \lfloor \frac{{k - 1}}{2} \rfloor } \right)}},\quad k > 1 \wedge i = \frac{n}{2} \wedge i \ne j,
\end{array} \right.} \right)\\
 - \sum\nolimits_{\scriptstyle s = 0\hfill\atop
\scriptstyle s \ne i\hfill}^n {d_{is}^{(1)}} ,\quad i = j
\end{array} \right.,
\end{equation}
%
\begin{equation}\label{eq:for32}
d_{ij}^{(2)} = \left\{ \begin{array}{l}
\frac{{2{\theta _j}}}{n}\sum\limits_{k = 2}^n {{\theta _k}} {\left( { - 1} \right)^{ \lfloor jk/n \rfloor }}{x_{jk - n\,\left\lfloor {\frac{{jk}}{n}} \right\rfloor }}\left( {\left\{ \begin{array}{l}
4,\quad k = 2 \wedge i \ne j,\\
\sum\limits_{l = 0}^{ \lfloor k/2 \rfloor } {\left( {k - 2l - 1} \right)} \left( {k - 2l} \right)c_l^{(k)}x_i^{k - 2l - 2},\quad k > 2 \wedge i \ne \frac{n}{2} \wedge i \ne j,\\
 - {\left( { - 1} \right)^{ \lfloor \frac{k}{2} \rfloor }}{k^2}{\mkern 1mu} {x_{n\left( {\frac{k}{2} - \left\lfloor {\frac{k}{2}} \right\rfloor } \right)}},\quad k > 2 \wedge i = \frac{n}{2} \wedge i \ne j,
\end{array} \right.} \right)\\
 - \sum\nolimits_{\scriptstyle s = 0\hfill\atop
\scriptstyle s \ne i\hfill}^n {d_{is}^{(2)}} ,\quad i = j
\end{array} \right.,
\end{equation}
respectively.

\section{Proposed Line Search Method}
\label{sec:PLSM}
In this section we present a novel line search method that we shall call the Chebyshev PS line search method (CPSLSM). The key idea behind our new approach is five-fold: 
\begin{enumerate}
	\item express the function as a linear combination of Chebyshev polynomials, 
	\item find its derivative, 
	\item find the derivative roots, 
	\item determine a local minimum in $[-1,1]$, and,
	\item finally reverse the change of variables to obtain the approximate local minimum of the original function.
\end{enumerate}
 To describe the proposed method, let us denote by $\mathbb{P}_n$ the space of polynomials of degree at most $n$, and suppose that we want to find a local minimum $t^*$ of a twice-continuously differentiable nonlinear single-variable function $f(t)$ on a fixed interval $[a, b]$ to within a certain accuracy $\varepsilon$. Using the change of variable,
\begin{equation}\label{eq:Kchangeofvar1}
	x = (2\,t - a - b)/(b - a),
\end{equation}
we transform the interval $[a, b]$ into $[-1, 1]$. We shall refer to a point $x$ corresponding to a candidate local minimum point $t$ according to Eq. \eqref{eq:Kchangeofvar1} by the translated candidate local minimum point.

Now let $I_4 f \in \mathbb{P}_4$, be the fourth-degree Chebyshev interpolant of $f$ at the CGL points such that
\begin{equation}\label{eq:Chebintext1}
	f(x;a,b) \approx {I_4}f(x;a,b) = \sum\limits_{k = 0}^4 {{{\tilde f}_k}\,{T_k}(x)}.
\end{equation}
We can determine the Chebyshev coefficients $\{{{\tilde f}_k}\}_{k=0}^4$ via the discrete Chebyshev transform
\begin{align}\label{eq:chebcoeffakin1}
{{\tilde f}_k} &= \frac{1}{{2\,{c_k}}}\sum\limits_{j = 0}^4 {\frac{1}{{{c_j}}}\cos \left( {\frac{{k\,j\,\pi }}{4}} \right)\,{}_a^b{f_j}}\nonumber \\
 &= \frac{1}{{2\,{c_k}}}\left[ {\frac{1}{2}\left( {{}_a^b{f_0} + {{( - 1)}^k}\,{}_a^b{f_4}} \right) + \sum\limits_{j = 1}^3 {\frac{1}{{{c_j}}}\cos \left( {\frac{{k\,j\,\pi }}{4}} \right)\,{}_a^b{f_j}} } \right],
\end{align}
where ${}_a^b{f_j} = f({x_j};a,b),\;j = 0, \ldots ,4,$ and
\begin{equation}
{c_k} = \left\{ \begin{array}{l}
	2,\quad k = 0,4,\\
	1,\quad k = 1, \ldots ,3.
	\end{array} \right.
\end{equation}
We approximate the derivative of $f$ by the derivative of its interpolant ${I_4}f(x;a,b)$,
\begin{equation}\label{eq:I4dash1}
	{I'_4}f(x;a,b) = \sum\limits_{k = 0}^4 {\tilde f_k^{(1)}{\mkern 1mu} {T_k}(x)} ,
\end{equation}
where the coefficients $\left\{\tilde f_k^{(1)}\right\}_{k=0}^4$ are akin to the coefficients of the original function, $\tilde f_k$, by the following recursion \cite{Kopriva2009}
\begin{equation}\label{eq:ckmain1}
	{\tilde c_k}\,\tilde f_k^{(1)} = \tilde f_{k + 2}^{(1)} + 2\,(k + 1)\,{\tilde f_{k + 1}},\quad k \ge 0,
\end{equation}
\begin{equation}\label{eq:Chebdercoeff1}
{{\tilde c}_k} = \;\left\{ {\begin{array}{*{20}{l}}
{{c_k},\quad k = 0, \ldots ,3,}\\
{1,\quad k = 4.}
\end{array}} \right.
\end{equation}
We can calculate $\left\{\tilde f_k^{(1)}\right\}_{k=0}^4$ efficiently using Algorithm \ref{sec:alg1Chebcoeffder1}. Now we collect the terms involving the same powers of $x$ in Eq. \eqref{eq:I4dash1} to get the cubic algebraic equation
\begin{equation}\label{eq:I4dash1simple1}
	{I'_4}f(x;a,b) = {A_1}{\mkern 1mu} {x^3} + {A_2}{\mkern 1mu} {x^2} + {A_3}{\mkern 1mu} x + {A_4},
\end{equation}
where
\begin{subequations}\label{eqs:subcoeffmaink1}
\begin{align}
{A_1} &= 4{\mkern 1mu} \tilde f_3^{(1)},\label{eqs:subcoeffmaink1a}\\
{A_2} &= 2{\mkern 1mu} \tilde f_2^{(1)},\label{eqs:subcoeffmaink1b}\\
{A_3} &= \tilde f_1^{(1)} - 3{\mkern 1mu} \tilde f_3^{(1)},\label{eqs:subcoeffmaink1c}\\
{A_4} &= \tilde f_0^{(1)} - \tilde f_2^{(1)}.\label{eqs:subcoeffmaink1d}
\end{align}
\end{subequations}
Let ${\varepsilon}_{c}$ and $\varepsilon_{\text{mach}}$ denote a relatively small positive number and the machine precision that is approximately equals $2.2204 \times 10^{-16}$ in double precision arithmetic, respectively. To find a local minimum of $f$, we consider the following three cases:

\textbf{Case} $\bm{1}$: If $\left| {{A_1}} \right|, \left| {{A_2}} \right| < {\varepsilon}_{c}$, then ${I'_4}f(x;a,b)$ is linear or nearly linear. Notice also that $A_3$ cannot be zero, since $f$ is nonlinear and formula \eqref{eq:Chebintext1} is exact for all polynomials $h_n(t) \in \mathbb{P}_4$. This motivates us to simply calculate the root $\bar x =  - {A_4}/{A_3}$, and set 
\begin{equation}\label{eq:verynew1}
	{t^*} \approx \frac{1}{2}\left( {(b - a)\,\bar x + a + b} \right),
\end{equation}
if $\left| \bar x \right| \le 1$. If not, then we carry out one iteration of the golden section search method on the interval $[a, b]$ to determine a smaller interval $[a_1, b_1]$ with candidate local minimum $\tilde t$. If the length of the new interval is below $\varepsilon$, we set ${t^*} \approx \tilde t$, and stop. Otherwise, we calculate the the first- and second-order derivatives of $f$ at the translated point $\tilde x_1$ in the interval $[-1,1]$ defined by
\begin{equation}\label{eq:newpt1}
\tilde x_1 = (2\,\tilde t - a_1 - b_1)/(b_1 - a_1).\\
\end{equation}
To this end, we construct the row CPSDMs $\bm{D}^{(1)} = \left(d_j^{(1)}\right)$ and $\bm{D}^{(2)} = \left(d_j^{(2)}\right)$ of length $(m+1)$, for some $m \in \mathbb{Z}^+$ using the following formulas:
\begin{equation}\label{eq:for311}
d_j^{(1)} = \left\{ \begin{array}{l}
\frac{{2{\theta _j}}}{m}\sum\limits_{k = 1}^m {{\theta _k}} {\left( { - 1} \right)^{ \lfloor jk/m \rfloor }}{x_{jk - m\,\left\lfloor {\frac{{jk}}{m}} \right\rfloor }}\left( {\left\{ \begin{array}{l}
1,\quad k = 1 \wedge j \ne m,\\
\sum\limits_{l = 0}^{ \lfloor k/2 \rfloor } {\left( {k - 2l} \right)} c_l^{(k)}{{\tilde x}_1}^{k - 2l - 1},\quad k > 1 \wedge {{\tilde x}_1} \ne 0 \wedge j \ne m,\\
{\left( { - 1} \right)^{ \lfloor \frac{{k - 1}}{2} \rfloor }}k{\mkern 1mu} {x_{m\left( {\frac{{k - 1}}{2} -  \lfloor \frac{{k - 1}}{2} \rfloor } \right)}},\quad k > 1 \wedge {{\tilde x}_1} = 0 \wedge j \ne m,
\end{array} \right.} \right)\\
 - \sum\nolimits_{\scriptstyle s = 0}^{m-1} {d_{s}^{(1)}} ,\quad j = m
\end{array} \right.,
\end{equation}
\begin{equation}\label{eq:for322}
d_j^{(2)} = \left\{ \begin{array}{l}
\frac{{2{\theta _j}}}{m}\sum\limits_{k = 2}^m {{\theta _k}} {\left( { - 1} \right)^{ \lfloor jk/m \rfloor }}{x_{jk - m\,\left\lfloor {\frac{{jk}}{m}} \right\rfloor }}\left( {\left\{ \begin{array}{l}
4,\quad k = 2 \wedge j \ne m,\\
\sum\limits_{l = 0}^{ \lfloor k/2 \rfloor } {\left( {k - 2l - 1} \right)} \left( {k - 2l} \right)c_l^{(k)}{{\tilde x}_1}^{k - 2l - 2},\quad k > 2 \wedge {{\tilde x}_1} \ne 0 \wedge j \ne m,\\
 - {\left( { - 1} \right)^{ \lfloor \frac{k}{2} \rfloor }}{k^2}{\mkern 1mu} {x_{m\left( {\frac{k}{2} - \left\lfloor {\frac{k}{2}} \right\rfloor } \right)}},\quad k > 2 \wedge {{\tilde x}_1} = 0 \wedge j \ne m,
\end{array} \right.} \right)\\
 - \sum\nolimits_{\scriptstyle s = 0}^{m-1} {d_{s}^{(2)}} ,\quad j = m
\end{array} \right.,
\end{equation}
respectively.	The computation of the derivatives can be carried out easily by multiplying $\bm{D}^{(1)}$ and $\bm{D}^{(2)}$ with the vector of function values; that is, 
\begin{subequations}\label{eq:der12k1}
\begin{align}
\bm{\mathcal{F}'} &\approx {{\mathbf{D}}^{(1)}} \bm{\mathcal{F}},\\
\bm{\mathcal{F}''} &\approx {{\mathbf{D}}^{(2)}} \bm{\mathcal{F}},
\end{align}
\end{subequations}
where $\bm{\mathcal{F}'} = \left[{}_{{a_1}}^{{b_1}}{f'_0}, \ldots, {}_{{a_1}}^{{b_1}}{f'_m}\right]^T; \bm{\mathcal{F}''} = \left[{}_{{a_1}}^{{b_1}}{f''_0}, \ldots, {}_{{a_1}}^{{b_1}}{f''_m}\right]^T$. Notice here how the CPSLSM deals adequately with Drawback (i) of \cite{Elgindy2008a} by calculating only one row from each of the CPSDMs in each iterate. If ${{\mathbf{D}}^{(2)}} \bm{\mathcal{F}} > \varepsilon_{\text{mach}}$, then Newton's direction is a descent direction, and we follow the \cite{Elgindy2008a} approach by updating $\tilde x_1$ according to the following formula
\begin{equation}\label{eq:increm1}
	\tilde x_2 = \tilde x_1 - \frac{{{{\mathbf{D}}^{(1)}}\;\bm{{\mathcal F}}}}{{{{\mathbf{D}}^{(2)}}\;\bm{{\mathcal F}}}}.
\end{equation}
At this stage, we consider the following scenarios:
\begin{description}
	\item[(i)] If the stopping criterion
\begin{equation}\label{eq:stopp1k1}
	\left| {{{\tilde x}_2} - {{\tilde x}_1}} \right| \le {\varepsilon _x} = \frac{{2\varepsilon }}{{{b_1} - {a_1}}},
\end{equation}
is fulfilled, we set 
\[{t^*} \approx (({b_1} - {a_1})\, {{\tilde x}_2} + {a_1} + {b_1})/2,\]
and stop. 
\item[(ii)] If $\left| {{{\tilde x}_2}} \right| > 1,$ we repeat the procedure again starting from the construction of a fourth-degree Chebyshev interpolant of $f(x;a_1,b_1)$ using the CGL points. 
\item[(iii)] If the magnitudes of both ${{\mathbf{D}}^{(1)}} \bm{\mathcal{F}}$ and ${{\mathbf{D}}^{(2)}} \bm{\mathcal{F}}$ are too small, which may appear as we mentioned earlier when the profile of the function $f$ is too flat near the current point, or if the function has a multiple local minimum, then the convergence rate of the iterative scheme \eqref{eq:increm1} is no longer quadratic, but rather linear. We therefore suggest here to apply Brent's method. To reduce the length of the search interval though, we consider the following two cases:
\begin{itemize}
	\item If ${{\tilde x}_2} > {{\tilde x}_1}$, then we carry out Brent's method on the interval $[(({b_1} - {a_1}){{\tilde x}_1} + {a_1} + {b_1})/2,{b_1}]$. Notice that the direction from ${{\tilde x}_1}$ into ${{\tilde x}_2}$ is a descent direction 
	as shown by \cite{Elgindy2008a}. 
	\item If ${{\tilde x}_2} < {{\tilde x}_1}$, then we carry out Brent's method on the interval $[a_1, (({b_1} - {a_1}){{\tilde x}_1} + {a_1} + {b_1})/2]$. 
\end{itemize}
\item[(iv)] If none of the above three scenarios occur, we compute the first- and second-order derivatives of the interpolant at $\tilde x_2$ as discussed before, set $\tilde x_1 := \tilde x_2$, and repeat the iterative formula \eqref{eq:increm1}.
\end{description}
If ${{\mathbf{D}}^{(2)}} \bm{\mathcal{F}} \le \varepsilon_{\text{mach}}$, we repeat the procedure again.\\

\textbf{Case} $\bm{2}$: If $\left| {{A_1}} \right| < {\varepsilon}_{c} \wedge \left|{{A_2}}\right| \ge {\varepsilon}_{c}$, then ${I'_4}f(x;a,b)$ is quadratic such that the second derivative of the derivative interpolant is positive and its graph is concave up (simply convex and shaped like a parabola open upward) if ${{A_2}} \ge {\varepsilon}_{c}$; otherwise, the second derivative of the derivative interpolant is negative and its graph is concave down; that is, shaped like a parabola open downward. For both scenarios, we repeat the steps mentioned in \textbf{Case} $\bm{1}$ starting from the golden section search method.

\textbf{Case} $\bm{3}$: If $\left| {{A_1}} \right| \ge {\varepsilon}_{c}$, then the derivative of the Chebyshev interpolant is cubic. To avoid overflows, we scale the coefficients of ${I'_4}f(x;a,b)$ by dividing each coefficient with the coefficient of largest magnitude if the magnitude of any of the coefficients is larger than unity. This procedure ensures that 
\begin{equation}\label{eq:ineqcoeffscaling1}
	{\max _{1 \le j \le 4}}\left| {{A_j}} \right| \le 1.
\end{equation}
The next step is divided into two subcases:

\textbf{Subcase} $\bm{I}$: If any of the three roots $\{\bar x_i\}_{i=1}^3$ of the cubic polynomial ${I'_4}f(x;a,b)$, is a complex number, or the magnitude of any of them is larger than unity, we perform one iteration of the golden section search method on the interval $[a, b]$ to determine a smaller interval $[a_1, b_1]$ with candidate local minimum $\tilde t$. Again, and as we showed before in \textbf{Case} $\bm{1}$, if the length of the new interval is below $\varepsilon$, we set ${t^*} \approx \tilde t$, and stop. Otherwise, we calculate the translated point $\tilde x_1$ using Eq. \eqref{eq:newpt1}. 

\textbf{Subcase} $\bm{II}$: If all of the roots are real, distinct, and lie within the interval $[-1, 1]$, we find the root that minimizes the value of $f$ among all three roots; that is, we calculate,	
	\begin{equation}\label{eq:bestrootkimo11}
{{\tilde x}_1} = \mathop {\arg \min }\limits_{1 \le i \le 3} f\left( {{{\bar x}_i};a,b} \right).		
	\end{equation}
We then update both rows of the CPSDMs $\bm{D}^{(1)} = \left(d_j^{(1)}\right)$ and $\bm{D}^{(2)} = \left(d_j^{(2)}\right)$ using Eqs. \eqref{eq:for311} and \eqref{eq:for322}, and calculate the first- and second- derivatives of the Chebyshev interpolant using Eqs. \eqref{eq:der12k1}. If ${{\mathbf{D}}^{(2)}} \bm{\mathcal{F}} > \varepsilon_{\text{mach}}$, then Newton's direction is a descent direction, and we follow the same procedure presented in \textbf{Case} $\bm{1}$ except when $\left| {{{\tilde x}_2}} \right| > 1$. To update the uncertainty interval $[a, b]$, we determine the second best root among all three roots; that is, we determine,
\begin{equation}\label{eq:secbest2}
	{{\tilde x}_2} = \mathop {\arg \min }\limits_{1 \le i \le 3} f\left( {{{\bar x}_i};a,b} \right):{{\tilde x}_2} \ne {{\tilde x}_1}.
\end{equation}
Now if ${{\tilde x}_1} > {{\tilde x}_2}$, we replace $a$ with $((b - a) {{\tilde x}_2} + a + b)/2$. Otherwise, we replace $b$ with $((b - a) {{\tilde x}_2} + a + b)/2$. The method then proceeds repeatedly until it converges to the local minimum $t^*$, or the number of iterations exceeds a preassigned value, say $k_{\max}$.

It is noteworthy to mention that the three roots of the cubic polynomial, ${I'_4}f(x)$, can be exactly calculated using the trigonometric method due to Fran\c{c}ois Vi\`{e}te; cf. \cite{Nickalls2006}. In particular, let
\begin{align}
	p &= \frac{{{A_3}}}{{{A_1}}} - \frac{1}{3}{\left( {\frac{{{A_2}}}{{{A_1}}}} \right)^2},\\
q &= \frac{2}{{27}}{\left( {\frac{{{A_2}}}{{{A_1}}}} \right)^3} - \frac{{{A_2}{A_3}}}{{3A_1^2}} + \frac{{{A_4}}}{{{A_1}}},
\end{align}
and define,
\begin{equation}
	C(p,q) = 2\sqrt { - \frac{p}{3}} \cos \left( {\frac{1}{3}{{\cos }^{ - 1}}\left( {\frac{{3q}}{{2p}}\sqrt {\frac{{ - 3}}{p}} } \right)} \right).
\end{equation}
Then the three roots can be easily computed using the following useful formulas
\begin{equation}
	\bar x_i = \bar t_i - \frac{{{A_2}}}{{3{A_1}}},\quad i = 1,2,3,
\end{equation}
where
\begin{subequations}
\begin{align}
{{\bar t}_1} &= C(p,q),\\
{{\bar t}_3} &=  - C(p, - q),\\
{{\bar t}_2} &=  - {{\bar t}_1} - {{\bar t}_3}.
\end{align}
\end{subequations}
If the three roots $\{\bar x_i\}_{i=1}^3$ are real and distinct, then it can be shown that they satisfy the inequalities $\bar x_1 > \bar x_2 > \bar x_3$. 
\begin{rem}
The tolerance ${\varepsilon}_x$ is chosen to satisfy the stopping criterion with respect to the variable $t$, since 
\[\left| {{{\tilde x}_2} - {{\tilde x}_1}} \right| \le {\varepsilon _x} \Rightarrow \left| {{{\tilde t}_2} - {{\tilde t}_1}} \right| \le \varepsilon,\]
where
\begin{equation}\label{eq:tildeti}
	{\tilde t_i} = (({b_1} - {a_1})\,{{\tilde x}_i} + {a_1} + {b_1})/2,\quad i = 1,2.
\end{equation}
\end{rem}
\begin{rem}
To reduce the round-off errors in the calculation of $\tilde x_2$ through Eq. \eqref{eq:increm1}, we prefer to scale the vector of function values $\bm{{\mathcal F}}$ if any of its elements is large. That is, we choose a maximum value ${{\mathcal F}}_{\max}$, and set $\bm{{\mathcal F}}:= \bm{{\mathcal F}}/{\max _{0 \le j \le m}}\left| {{}_{{a_1}}^{{b_1}}{f_j}} \right|$ if ${\max _{0 \le j \le m}}\left| {{}_{{a_1}}^{{b_1}}{f_j}} \right| > {{\mathcal F}}_{\max}$. This procedure does not alter the value of $\tilde x_2$, since the scaling of $\bm{{\mathcal F}}$ is canceled out through division.
\end{rem}
\begin{rem}
The derivative of the Chebyshev interpolant, ${I'_4}f(x)$, has all three simple zeros in $(-1,1)$ if $\sum\nolimits_{k = 0}^3 {\tilde f_k^{(1)}\,{x^k}}$, has all its zeros in $(-1,1)$; cf. \cite{Peherstorfer1995}.
\end{rem}
\begin{rem}
Notice how the CPSLSM handles Drawback (ii) of \cite{Elgindy2008a} by simply estimating an initial guess within the uncertainty interval with the aid of a Chebyshev interpolant instead of calculating the first and second derivatives of the objective function at a population of candidate points; thus reducing the required calculations significantly, especially if several uncertainty intervals were encountered during the implementation of the algorithm due to the presence of the local minimum outside the initial uncertainty interval. Moreover, the CPSLSM handles Drawback (iii) of \cite{Elgindy2008a} by taking advantage of approximate derivative information derived from the constructed Chebyshev interpolant instead of estimating an initial guess by comparing the objective function values at the CGL population points that are distributed along the interval $[-1; 1]$; thus significantly accelerating the implementation of the algorithm. Drawback (iv) is resolved by constraining all of the translated candidate local minima to lie within the Chebyshev polynomials feasible domain $[-1,1]$ at all iterations. Drawback (v) is treated efficiently by combining the popular numerical optimization algorithms: the golden-section algorithm and Newton's iterative scheme endowed with CPSDMs. Brent's method is integrated within the CPSLSM to overcome Drawback (vi).
\end{rem}
The CPSLSM can be implemented efficiently using Algorithms \ref{sec1:alg:CPSLSM2}--\ref{sec1:alg:ChebyshevNewton} in Appendix \ref{appendix:PS1}.
\subsection{Locating an Uncertainty Interval}
\label{eq:laui1}
It is important to mention that the proposed CPSLSM can easily work if the uncertainty interval is not known a priori. In this case the user inputs any initial interval, say $\left[{\tilde a},{\tilde b}\right]$. We can then divide the interval into some $l$ uniform subintervals using $(l+1)$ equally-spaced nodes $\{t_i\}_{i=0}^l$. We then evaluate the function $f$ at those points and find the point $t_j$ that minimizes $f$ such that
\[t_j = \mathop {\arg \min }\limits_{0 \le i \le l} f({t_i}).\]
Now we have the following three cases:
\begin{itemize}
  \item If $0 < j < l$, then we set ${\tilde a} = t_{j-1}$ and ${\tilde b} = t_{j+1}$, and return.
	\item If $j = 0$, then we divide ${\tilde a}$ by $\rho^k$ if ${\tilde a} > 0$, where $\rho \approx 1.618$ is the golden ratio and $k$ is the iteration number as shown by \cite{Elgindy2008a}. However, to avoid Drawback (vii) in Section \ref{sec:TEAHOMR}, we replace the calculated $\tilde a$ with $-1/{\tilde a}$ if ${\tilde a} < 1$. Otherwise, we multiply ${\tilde a}$ by $\rho^k$. In both cases we set ${\tilde b} = t_1$, and repeat the search procedure.
	\item If $j = l$, then we multiply ${\tilde b}$ by $\rho^k$ if ${\tilde b} > 0$. Otherwise, we divide ${\tilde b}$ by $\rho^k$ and replace the calculated ${\tilde b}$ with $-1/{\tilde b}$ if ${\tilde b} > -1$. In both cases we set ${\tilde a} = t_{l-1}$, and repeat the search procedure.
\end{itemize}
The above procedure avoids Drawback (vii), and proceeds repeatedly until an uncertainty interval $[a,b]$ is located, or the number of iterations exceeds $k_{\max}$.
\subsection{The CPSLSM Using First-Order Information Only}
\label{eq:tlsmufoio1}
Suppose that we want to find a local minimum $t^*$ of a differentiable nonlinear single-variable function $f(t)$ on a fixed interval $[a, b]$ to within a certain accuracy $\varepsilon$. Moreover, suppose that the second-order information is not available. We can slightly modify the method presented in Section \ref{sec:PLSM} to work in this case. In particular, in \textbf{Case} $\bm{1}$, we carry out one iteration of the golden section search method on the interval $[a, b]$ to determine a smaller interval $[a_1, b_1]$ with two candidate local minima $\tilde t_1$ and $\tilde t_2$: $f\left(\tilde t_2\right) < f\left(\tilde t_1\right)$. If the length of the new interval is below $\varepsilon$, we set ${t^*} \approx \tilde t_2$, and stop. Otherwise, we calculate the first-order derivatives of $f$ at the two points $\tilde x_1$ and $\tilde x_2$ defined by Eqs. \eqref{eq:tildeti}. We then calculate
\begin{equation}\label{eq:Secantkk1}
	s_1 = \frac{{{{\tilde x}_2} - {{\tilde x}_1}}}{{{\mathbf{D}}_2^{(1)}\bm{{\mathcal F}} - {\mathbf{D}}_1^{(1)}\bm{{\mathcal F}}}},
\end{equation}
where ${\mathbf{D}}_1^{(1)}$ and ${\mathbf{D}}_2^{(1)}$ are the first-order CPSDMs corresponding to the points $\tilde x_1$ and $\tilde x_2$, respectively. If $s_1 > \varepsilon_{\text{mach}}$, we follow \cite{Elgindy2008a} approach, and calculate the secant search direction
\begin{equation}\label{eq:increm111}
	s_2 = - s_1\, \cdot {\mathbf{D}}_2^{(1)}\bm{{\mathcal F}},
\end{equation}
then update $\tilde x_2$ according to the following formula,
\begin{equation}\label{eq:increm11}
	\tilde x_3 = \tilde x_2 + s_2.
\end{equation}
Again, we consider the following course of events:
\begin{description}
	\item[(i)] If the stopping criterion
\[\left| {{{\tilde x}_3} - {{\tilde x}_2}} \right| \le {\varepsilon}_x,\]
is fulfilled, we set 
\[{t^*} \approx (({b_1} - {a_1})\,{{\tilde x}_3} + {a_1} + {b_1})/2,\]
and stop. 
\item[(ii)] If $\left| {{{\tilde x}_3}} \right| > 1,$ we repeat the procedure again starting from the construction of a fourth-degree Chebyshev interpolant of $f(x;a_1,b_1)$ at the CGL points. 
\item[(iii)] The third scenario appears when the magnitudes of both $s_2$ and $1/s_1$ are too small. We then apply Brent's method as described by the following two cases:
\begin{itemize}
	\item If ${{\tilde x}_3} > {{\tilde x}_2}$, then we carry out Brent's method on the interval $[(({b_1} - {a_1}){{\tilde x}_2} + {a_1} + {b_1})/2,{b_1}]$. 
	\item If ${{\tilde x}_3} < {{\tilde x}_2}$, then we carry out Brent's method on the interval $[a_1, (({b_1} - {a_1}){{\tilde x}_2} + {a_1} + {b_1})/2]$. 
\end{itemize}
\item[(iv)] If none of the above three scenarios appear, we replace ${\mathbf{D}}_1^{(1)}\bm{{\mathcal F}}$ by ${\mathbf{D}}_2^{(1)}\bm{{\mathcal F}}$, calculate the first-order derivative of the interpolant at $\tilde x_3$, update $\{s_i\}_{i=1}^2$, set $\tilde x_2 := \tilde x_3$, and repeat the iterative formula \eqref{eq:increm11}. 
\end{description}
Finally, if $s_1 \le \varepsilon_{\text{mach}}$, we repeat the procedure again.

In \textbf{Case} $\bm{3}$ (\textbf{Subcase} $\bm{I}$), we apply one iteration of the golden section search method on the interval $[a, b]$ to determine a smaller interval $[a_1, b_1]$ with candidate local minima $\tilde t_1$ and $\tilde t_2: f\left(\tilde t_2\right) < f\left(\tilde t_1\right)$. If the length of the new interval is below $\varepsilon$, we set ${t^*} \approx \tilde t_2$, and stop. Otherwise, we calculate the two points $\tilde x_1$ and $\tilde x_2$ using Eqs. \eqref{eq:tildeti}. For \textbf{Subcase} $\bm{II}$, we find the best root, ${{\tilde x}_2}$, that minimizes the value of $f$ among all three roots. Then suppose that ${\tilde x_2} = {\bar x_J}$, for some $J = 1, 2, 3$. To calculate the secant search direction, we need another point ${{\tilde x}_1}$ within the interval $[-1,1]$. This can be easily resolved by making use of the useful inequalities $\bar x_1 > \bar x_2 > \bar x_3$. In particular, we proceed as follows:
\begin{itemize}
	\item If $J = 1$, then $x_J > x_2 > x_3$, and we set ${{\tilde x}_1} = {{\tilde x}_2} - ({{\tilde x}_2} - {{\bar x}_2})/{\rho ^2}$.
	\item If $J = 2$, then $x_1 > x_J > x_3$, and we set ${{\tilde x}_1} = {{\tilde x}_2} - ({{\tilde x}_2} - {{\bar x}_3})/{\rho ^2}$.
	\item If $J = 3$, then $x_1 > x_2 > x_J$, and we set ${{\tilde x}_1} = {{\tilde x}_2} + ({{\bar x}_2} - {{\tilde x}_2})/{\rho ^2}$.	
\end{itemize}
This procedure is then followed by updating both rows of the CPSDMs, $\bm{D}_1^{(1)}$ and $\bm{D}_2^{(1)}$, and calculating the first-order derivatives of the Chebyshev interpolant at the two points $\{\tilde x_i\}_{i=1}^2$. We then update $s_1$ using Eq. \eqref{eq:Secantkk1}. If $s_1 > \varepsilon_{\text{mach}}$, then the secant direction is a descent direction, and we follow the same procedure discussed before. The method then proceeds repeatedly until it converges to the local minimum $t^*$, or the number of iterations exceeds the preassigned value $k_{\max}$. Notice that the convergence rate of the CPSLSM using first-order information only is expected to be slower than its partner using second-order information, since it performs the secant iterative formula as one of its ingredients rather than Newton's iterative scheme; thus the convergence rate degenerates from quadratic to superlinear.
\begin{rem}
It is inadvisable to assign the value of ${{\tilde x}_1}$ to the second best root that minimizes the value of $f$, since the function profile could be changing rapidly near ${{\tilde t}_2}$; thus $1/s_1$ yields a poor approximation to the second derivative of $f$. In fact, applying this procedure on the rapidly varying function $f_7$ (see Section \ref{sec:numex1}) near $t = 0$ using the CPSLSM gives the poor approximate solution $t^* \approx 5.4$ with $f_7(t^*) \approx -0.03$.
\end{rem}
\subsection{Integration With Multivariate Nonlinear Optimization Algorithms}
\label{eq:IWMNOA1}
The proposed CPSLSM can be integrated easily with multivariate nonlinear optimization algorithms. Consider, for instance, one of the most popular quasi-Newton algorithms for solving unconstrained nonlinear optimization problems widely known as Broyden-Fletcher-Goldfarb-Shanno (BFGS) algorithm. Algorithm \ref{sec1:alg:MBFGSA1} implements a modified BFGS method endowed with a line search method, where a scaling of the search direction vector, $\bm{p}_k$, is applied at each iteration whenever its size exceeds a prescribed value ${p_{{\max}}}$. This step is required to avoid multiplication with large numbers; thus maintaining the stability of the numerical optimization scheme. Practically, the initial approximate Hessian matrix $\mathbf{B}_{0}$ can be initialized with the identity matrix $\mathbf{I}$, so that the first step is equivalent to a gradient descent, but further steps are more and more refined by $\mathbf{B}_{k}, k = 1, 2, \ldots$. To update the search direction at each iterate, we can easily calculate the approximate inverse Hessian matrix, ${\mathbf{B}}_k^{ - 1}$, for each $k = 1, 2, \ldots$, by applying the Sherman-Morrison formula \cite{Sherman1949} giving
\begin{equation}
	{\mathbf{B}}_{k + 1}^{ - 1} = {\mathbf{B}}_k^{ - 1} + \frac{{\left({\bm{s}}_k^T{{\bm{y}}_k} + {\bm{y}}_k^T{\mathbf{B}}_k^{ - 1}{{\bm{y}}_k}\right)\left({{\bm{s}}_k}{\bm{s}}_k^T\right)}}{{{{\left({\bm{s}}_k^T{{\bm{y}}_k}\right)}^2}}} - \frac{{{\mathbf{B}}_k^{ - 1}{{\bm{y}}_k}{\bm{s}}_k^T + {{\bm{s}}_k}{\bm{y}}_k^T{\mathbf{B}}_k^{ - 1}}}{{{\bm{s}}_k^T{{\bm{y}}_k}}},
\end{equation}
where
\begin{align}
	{\bm{s}_k} &= {\alpha _k}{\bm{p}_k},\\
	{\bm{y}_k} &= \nabla f\left({\bm{x}^{(k + 1)}}\right) - \nabla f\left({\bm{x}^k}\right),
\end{align}
instead of typically solving the linear system
\begin{equation}
	{{\mathbf{B}}_k}{{\bm{p}}_k} =  - \nabla f\left( {{\bm{x}^{(k + 1)}}} \right),\quad k = 1, 2, \ldots.
\end{equation}
Since $\bm{p}_k$ is a descent direction at each iteration, we need to adjust the CPSLSM to search for an approximate local minimum $\alpha_k$ in $\mathbb{R}^+$. To this end, we choose a relatively small positive number, say $\hat \varepsilon$, 
and allow the initial uncertainty interval $[\hat \varepsilon, b]: b > \hat \varepsilon$, to expand as discussed before, but only rightward the real line of numbers. 

\section{Error Analysis}
\label{sec:EA1}
A major step in implementing the proposed CPSLSM lies in the interpolation of the objective function $f$ using the CGL points. In fact, the exactness of formula \eqref{eq:Chebintext1} for all polynomials $h_n(t) \in \mathbb{P}_4$ allows for faster convergence rates than the standard quadratic and cubic interpolation methods. 
From another point of view, the CGL points have a number of pleasing advantages as one of the most commonly used node distribution in spectral methods and numerical discretizations. They include the two endpoints, $-1$ and $1$, so they cover the whole search interval $[-1,1]$. Moreover, it is well known that the Lebesgue constant gives an idea of how good the interpolant of a function is in comparison with the best polynomial approximation of the function. Using Theorem 3.4 in \cite{Hesthaven1998a}, we can easily deduce that the Lebesgue constant, $\Lambda _4^{CGL}$, for interpolation using the CGL set $\{x_i\}_{i=0}^4$, is uniformly bounded by those obtained using the Gauss nodal set that is close to that of the optimal canonical nodal set. In particular, $\Lambda _4^{CGL}$ is bounded by
\begin{equation}
	\Lambda _4^{CGL} < \frac{{\gamma \log (100) + \log (1024/{\pi ^2})}}{{\pi \log (10)}} + {\alpha _3} \approx 1.00918 + {\alpha _3},\\
\end{equation}
 where $\gamma = 0.57721566\ldots$, represents Euler's constant, and $0 < {\alpha _3} < \pi /1152$. 
\subsection{Rounding Error Analysis for the Calculation of the CPSDMs}
\label{subsec:REA1}
In this section we address the effect of round-off errors encountered in the calculation of the elements $d_{01}^{(1)}$ and $d_{01}^{(2)}$ given by 
\begin{align}
d_{0,1}^{(1)} &= \frac{2}{n}{\mkern 1mu} \left( {{{\left( { - 1} \right)}^{ \lfloor 1/n \rfloor }}{x_{1 - n \lfloor \frac{1}{n} \rfloor }} + \sum\limits_{k = 2}^n {{\theta _k}\,{{\left( { - 1} \right)}^{ \lfloor k/n \rfloor }}{x_{k - n{\kern 1pt}  \lfloor \frac{k}{n} \rfloor }}{\mkern 1mu} \sum\limits_{l = 0}^{ \lfloor k/2 \rfloor } {\left( {k - 2l} \right)} \,C_l^{(k)}} } \right),\nonumber\\
&= \frac{2}{n}{\mkern 1mu} \left( {{{\left( { - 1} \right)}^{ \lfloor 1/n \rfloor }}{x_{1 - n \lfloor \frac{1}{n} \rfloor }} + \sum\limits_{k = 2}^{n - 1} {{x_{k - n{\kern 1pt}  \lfloor \frac{k}{n} \rfloor }}{\mkern 1mu} \sum\limits_{l = 0}^{ \lfloor k/2 \rfloor } {\left( {k - 2l} \right)} \,C_l^{(k)}} } \right) - n,\quad n \ge 2,\\
d_{0,1}^{(2)} &= \frac{2}{n}{\mkern 1mu} \left( {4{{\left( { - 1} \right)}^{ \lfloor 2/n \rfloor }}{x_{2 - n \lfloor \frac{2}{n} \rfloor }} + \sum\limits_{k = 3}^n {{\theta _k}{{\left( { - 1} \right)}^{ \lfloor k/n \rfloor }}{x_{k - n \lfloor \frac{k}{n} \rfloor }}{\mkern 1mu} \sum\limits_{l = 0}^{ \lfloor k/2 \rfloor } {\left( {k - 2l - 1} \right)\left( {k - 2l} \right)C_l^{(k)}} } } \right),\nonumber\\
&= \frac{2}{n}\left( {4{{\left( { - 1} \right)}^{ \lfloor 2/n \rfloor }}{x_{2 - n \lfloor \frac{2}{n} \rfloor }} + \sum\limits_{k = 3}^{n - 1} {{x_{k - n \lfloor \frac{k}{n} \rfloor }}\sum\limits_{l = 0}^{ \lfloor k/2 \rfloor } {\left( {k - 2l - 1} \right)\left( {k - 2l} \right)C_l^{(k)}} } } \right) - \frac{1}{3}n\left( {{n^2} - 1} \right),\quad n \ge 3,
\end{align}
respectively, since they are the major elements with regard to their values. Accordingly, they bear the major error responsibility comparing to other elements. So let $\delta \approx 1.11 \times 10^{-16}$, be the round-off unity in the double-precision floating-point system, and assume that $\{x_k^*\}_{k=0}^n$ are the exact CGL points, $\{x_k\}_{k=0}^n$ are the computed values, and $\{\delta _k\}_{k=0}^n$ are the corresponding round-off errors such that
\begin{equation}\label{ch3:exact}
x_k^*  = x_k  + \delta _k \; \forall k,
\end{equation}
with $\left|{\delta _k }\right| \le \delta\; \forall k$. If we denote the exact elements of $\bm{D}^{(1)}$ and $\bm{D}^{(2)}$ by $d_{i,j}^{(1)*}$ and $d_{i,j}^{(2)*}\; \forall i,j$, respectively, then
\begin{align}
  {{d_{01}^{(1)}}^*  - d_{01}^{(1)} }  &= \frac{2}{n}{\mkern 1mu} \left( {{{\left( { - 1} \right)}^{ \lfloor 1/n \rfloor }}{\delta _{1 - n \lfloor \frac{1}{n} \rfloor }} + \sum\limits_{k = 2}^{n - 1} {{\delta _{k - n{\kern 1pt}  \lfloor \frac{k}{n} \rfloor }}{\mkern 1mu} \sum\limits_{l = 0}^{ \lfloor k/2 \rfloor } {\left( {k - 2l} \right)} \,C_l^{(k)}} } \right) \nonumber\\
  &\le \frac{{2\,\delta }}{n}{\mkern 1mu} \left( {1 + \sum\limits_{k = 2}^{n - 1} {{\mkern 1mu} \sum\limits_{l = 0}^{ \lfloor k/2 \rfloor } {\left( {k - 2l} \right)} \,C_l^{(k)}} } \right) \nonumber\\
  &= \frac{{2\,\delta }}{n}{\mkern 1mu} \left( {1 + \sum\limits_{k = 2}^{n - 1} {{k^2}} } \right) = \frac{\delta }{3}(n - 1)(2\,n - 1) = O\left( {{n^2}\delta } \right).\label{eq:d01upb1}
\end{align}
Moreover,
\begin{align}
{d{_{01}^{(2)}}^*} - d_{01}^{(2)} &= \frac{2}{n}\left( {4{{\left( { - 1} \right)}^{ \lfloor 2/n \rfloor }}{\delta _{2 - n \lfloor \frac{2}{n} \rfloor }} + \sum\limits_{k = 3}^{n - 1} {{\delta _{k - n \lfloor \frac{k}{n} \rfloor }}\sum\limits_{l = 0}^{ \lfloor k/2 \rfloor } {\left( {k - 2l - 1} \right)\left( {k - 2l} \right)C_l^{(k)}} } } \right) \nonumber\\
 &\le \frac{{2\,\delta }}{n}\left( {4 + \sum\limits_{k = 3}^{n - 1} {\sum\limits_{l = 0}^{ \lfloor k/2 \rfloor } {\left( {k - 2l - 1} \right)\left( {k - 2l} \right)C_l^{(k)}} } } \right) \nonumber\\
 &= \frac{{2\,\delta }}{n}\left( {4 + \frac{1}{3}\sum\limits_{k = 3}^{n - 1} {\left( {{k^4} - {k^2}} \right)} } \right) = \frac{\delta }{{15}}\left( { - 2 + 5n - 5{n^3} + 2{n^4}} \right) = O\left({n^4}\delta \right).\label{eq:d01upb2}
\end{align}

\begin{rem}
It is noteworthy to mention here that the round-off error in the calculation of ${d_{01}^{(1)}}$ from the classical Chebyshev differentiation matrix is of order $O(n^4 \delta)$; cf. \cite{Canuto1988,Baltensperger2003}. Hence, Formulas \eqref{eq:for31} are better numerically. Moreover, the upper bounds \eqref{eq:d01upb1} and \eqref{eq:d01upb2} are in agreement with those obtained by \cite{Elbarbary2005}. 
\end{rem}
\section{Sensitivity Analysis}
\label{sec:SA1}
The following theorem highlights the conditioning of a given root $\bar x_i, i=1,2,3,$ with respect to a given coefficient $A_j, j = 1,2,3,4$.
\begin{thm}\label{thm:1}
Let ${I'_4}f(x)$ be the cubic polynomial defined by Eq. \eqref{eq:I4dash1simple1} after scaling the coefficients $A_j, j = 1,2,3,4$, such that Condition \eqref{eq:ineqcoeffscaling1} is satisfied. Suppose also that $\bar x_i$ is a simple root of ${I'_4}f(x)$, for some $i = 1,2,3$. Then the relative condition number, ${\kappa _{i,j}}$, of $\bar x_i$ with respect to $A_j$ is given by
\begin{equation}\label{eq:thm11}
{\kappa _{i,j}} = \frac{1}{2}\left| {\frac{{{A_j}\,\bar x_i^{3 - j}}}{{3\,{A_1}\,{{\bar x}_i} + {A_2}}}} \right|.
\end{equation}
Moreover, ${\kappa _{i,j}}$ is bounded by the following inequality
\begin{equation}\label{eq:inequality11}
{\kappa _{i,j}} \le \frac{1}{2}\frac{1}{{\left| {3\,{A_1}\,{{\bar x}_i} + {A_2}} \right|}}.
\end{equation}
\end{thm}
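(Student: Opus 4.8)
The plan is to treat $\bar x_i$ as an implicitly defined function of the single coefficient $A_j$ (the remaining three coefficients held fixed) and to read off its first-order sensitivity from the implicit function theorem. Write $F(x,A_j) := {I'_4}f(x) = {A_1}x^{3} + {A_2}x^{2} + {A_3}x + {A_4}$ as in \eqref{eq:I4dash1simple1}. Since $\bar x_i$ is assumed to be a \emph{simple} root, $F_x(\bar x_i,A_j) = ({I'_4}f)'(\bar x_i) = 3{A_1}\bar x_i^{2} + 2{A_2}\bar x_i + {A_3} \neq 0$, so the implicit function theorem produces a differentiable branch $A_j \mapsto \bar x_i(A_j)$ with $F(\bar x_i(A_j),A_j) \equiv 0$ in a neighbourhood of the nominal coefficients; this is the set-up I would write down first.

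Next I would differentiate the identity $F(\bar x_i(A_j),A_j) \equiv 0$ with respect to $A_j$. Using that $A_j$ is the coefficient of $x^{4-j}$ in \eqref{eq:I4dash1simple1}, so that $\partial F/\partial A_j = x^{4-j}$, the chain rule evaluated at $x = \bar x_i$ gives
\[
({I'_4}f)'(\bar x_i)\,\frac{\partial \bar x_i}{\partial A_j} + \bar x_i^{\,4-j} = 0, \qquad \text{so} \qquad \frac{\partial \bar x_i}{\partial A_j} = -\frac{\bar x_i^{\,4-j}}{({I'_4}f)'(\bar x_i)}.
\]
Substituting this into the definition of the relative condition number, $\kappa_{i,j} = \bigl|(A_j/\bar x_i)\,(\partial \bar x_i/\partial A_j)\bigr|$, cancels one power of $\bar x_i$ and yields $\kappa_{i,j} = \bigl|A_j\,\bar x_i^{\,3-j}\big/({I'_4}f)'(\bar x_i)\bigr|$, which already carries the numerator $A_j\,\bar x_i^{\,3-j}$ appearing in \eqref{eq:thm11}. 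It then remains to re-express the denominator $({I'_4}f)'(\bar x_i) = 3{A_1}\bar x_i^{2} + 2{A_2}\bar x_i + {A_3}$ in the stated form $2\,(3{A_1}\bar x_i + {A_2})$, which I would attempt by invoking the root equation ${A_1}\bar x_i^{3} + {A_2}\bar x_i^{2} + {A_3}\bar x_i + {A_4} = 0$ together with the interrelations \eqref{eqs:subcoeffmaink1} among $A_1,\dots,A_4$; this then gives \eqref{eq:thm11}.

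For the bound \eqref{eq:inequality11}, I would invoke the scaling hypothesis \eqref{eq:ineqcoeffscaling1}, which gives $|A_j| \le 1$, together with the fact (established in \textbf{Subcase} $\bm{II}$) that the roots under consideration lie in $(-1,1)$, whence $|\bar x_i^{\,3-j}| \le 1$. Consequently the numerator of \eqref{eq:thm11} has modulus at most one, and dividing through by $|3{A_1}\bar x_i + {A_2}|$ yields \eqref{eq:inequality11} at once.

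The step I expect to be the real obstacle is the reduction at the end of the second paragraph: the generic implicit-differentiation argument delivers the denominator $({I'_4}f)'(\bar x_i) = 3{A_1}\bar x_i^{2} + 2{A_2}\bar x_i + {A_3}$, and reconciling it with the degree-reduced denominator $2\,(3{A_1}\bar x_i + {A_2})$ of \eqref{eq:thm11} — in particular accounting for the extra factor $\tfrac12$ — is the only non-mechanical point, and it has to be extracted from the root identity and the explicit structure of $A_1,\dots,A_4$. The implicit function theorem set-up, the chain-rule differentiation, and the norm estimate used for the bound are all routine.
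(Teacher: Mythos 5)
Your set-up is the same as the paper's: the paper perturbs $A_j$ by $\delta A_j$ and applies a first-order (mean-value) expansion of ${I'_4}f$ at $\bar x_i$, which is exactly your implicit-function-theorem computation, and both routes land on the intermediate identity $\kappa_{i,j} = \left|A_j\,\bar x_i^{3-j}\big/\bigl(3A_1\bar x_i^2 + 2A_2\bar x_i + A_3\bigr)\right|$, i.e.\ with $({I'_4}f)'(\bar x_i)$ in the denominator. The difficulty is the step you defer to the end of your second paragraph, and it is a genuine gap rather than a mechanical one: you never exhibit the identity $({I'_4}f)'(\bar x_i) = 2\,(3A_1\bar x_i + A_2)$, and no such identity holds. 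The relations \eqref{eqs:subcoeffmaink1} merely express $A_1,\dots,A_4$ through the coefficients $\tilde f_k^{(1)}$ and impose no algebraic constraint among the $A_j$ (every cubic arises this way), and substituting the root equation $A_1\bar x_i^3+A_2\bar x_i^2+A_3\bar x_i+A_4=0$ only turns the denominator into $2A_1\bar x_i^2 + A_2\bar x_i - A_4/\bar x_i$, which is still not of the required form. A concrete counterexample: ${I'_4}f(x)=x^3-x$ (so $A_1=1$, $A_2=A_4=0$, $A_3=-1$, consistent with \eqref{eqs:subcoeffmaink1} and with \eqref{eq:ineqcoeffscaling1}) has the simple root $\bar x=1$, where $({I'_4}f)'(1)=2$ while $2(3A_1+A_2)=6$. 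Note that $2(3A_1x+A_2)=({I'_4}f)''(x)$, so \eqref{eq:thm11} carries the \emph{second} derivative of the cubic where the sensitivity analysis produces the \emph{first}; your derivation, carried to its honest conclusion, proves $\kappa_{i,j}=\bigl|A_j\bar x_i^{3-j}\bigr|/\bigl|3A_1\bar x_i^2+2A_2\bar x_i+A_3\bigr|$ rather than \eqref{eq:thm11}. (The obstacle you flagged is real and is not resolved in the source either: the paper's proof stops at the same intermediate expression and asserts that \eqref{eq:thm11} ``follows'' without performing the reduction.)

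On the bound: granting \eqref{eq:thm11}, your estimate needs $\left|A_j\,\bar x_i^{3-j}\right|\le 1$. This works for $j=1,2,3$ under \eqref{eq:ineqcoeffscaling1} together with $|\bar x_i|\le 1$ --- an assumption you import from \textbf{Subcase} $\bm{II}$ but which is not among the theorem's hypotheses and should be stated --- whereas for $j=4$ the exponent $3-j=-1$ gives $\left|\bar x_i^{-1}\right|\ge 1$, so $|A_4/\bar x_i|\le 1$ does not follow from the scaling alone and that case needs a separate argument. So the concluding inequality is also not fully established as written.
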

\begin{proof}
Suppose that the $j$th coefficient $A_j$ of ${I'_4}f(x) = \sum\nolimits_{j = 1}^4 {{A_j}{\mkern 1mu} {x^{4 - j}}}$, is perturbed by an infinitesimal quantity $\delta A_j$, so that the change in the polynomial is $\delta {I'_4}f(x)$. Suppose also that $\delta \bar x_i$ denotes the perturbation in the $i$th root $\bar x_i$ of ${I'_4}f(x)$. Then by the mean value theorem
\begin{equation}
-\delta {A_j}{\bar x_i^{4-j}} = \delta {I'_4}f(\bar x_i) = {I''_4}f(\bar x_i)\,\delta \bar {x_i} \Rightarrow \delta {\bar x_i} = \frac{{ -\delta {A_j}\,\bar x_i^{4-j}}}{{I''_4}f(\bar x_i)}.
\end{equation}
The condition number of finding $\bar x_i$ with respect to perturbations in the single coefficient $A_j$ is therefore
\begin{equation*}
\kappa_{i,j}  = \mathop {\lim }\limits_{\delta  \to 0} \mathop {\sup }\limits_{\left|\delta {A_j} \right| \le \delta} \left( {\frac{{\left| {\delta \bar {x_i}} \right|}}{{\left| {\bar {x_i}} \right|}}/\frac{{\left| {\delta {A_j}} \right|}}{{\left| {{A_j}} \right|}}} \right) = \mathop {\lim }\limits_{\delta  \to 0} \mathop {\sup }\limits_{\left|\delta {A_j} \right| \le \delta} \left( {\frac{{\left| { -\delta {A_j}\,\bar x_i^{4-j}/{I''_4}f(\bar x_i)} \right|}}{{\left| {{\bar x_i}} \right|}}/\frac{{\left| {\delta {A_j}} \right|}}{{\left| {{A_j}} \right|}}} \right).
\end{equation*}
\begin{equation}
\Rightarrow \kappa_{i,j} = \left| {\frac{{{A_j}\,\bar x_i^{3-j}}}{{{I''_4}f(\bar x_i)}}} \right|,
\end{equation}
from which Eq. \eqref{eq:thm11} and inequality \eqref{eq:inequality11} follow.
\end{proof}
\section{Numerical Experiments}
\label{sec:numex1}
In the following two sections we show our numerical experiments for solving two sets of one- and multi-dimensional optimization test problems. All numerical experiments were conducted on a personal laptop equipped with an Intel(R) Core(TM) i7-2670QM CPU with 2.20GHz speed running on a Windows 10 64-bit operating system, and the numerical results were obtained using MATLAB software V. R2014b (8.4.0.150421).
\subsection{One-Dimensional Optimization Test Problems}
\label{subsec:ODOTP1}
In this section, we first apply the CPSLSM using second-order information on the seven test functions $f_i, i = 1, \ldots, 7$, considered earlier by \cite{Elgindy2008a}, in addition to the following five test functions
\begin{align*}
	f_8(t) &= (t-3)^{12} + 3 t^4,\\
	f_9(t) &= \log \left( {{t^2} + 1} \right) + \cosh (t) + 1,\\
	f_{10}(t) &= \log \left( {\tanh \left( {{t^2}} \right) + {e^{ - {t^2}}}} \right),\\
	{f_{11}}(t) &= {\left( {t - 99} \right)^2}\sinh \left( {\frac{1}{{1 + {t^2}}}} \right),\\
	{f_{12}}(t) &= {t^3} + \left( {3.7 + t + {t^2} - {t^3}} \right)\tanh \left( {{{\left( { - 5.5 + t} \right)}^2}} \right).
\end{align*}
The plots of the test functions are shown in Figure \ref{funplots}. The exact local minima and their corresponding optimal function values obtained using MATHEMATICA 9 software accurate to $15$ significant digits precision are shown in Table \ref{sec:numerical:tab:tf1}. All of the results are presented against the widely used MATLAB `fminbnd' optimization solver to assess the accuracy and efficiency of the current work. We present the number of correct digits ${\text{cd}}_n: =  - {\log _{10}}\left| {{t^*} - {{\tilde t}^*}} \right|$, obtained for each test function, where ${\tilde t}^*$ is the approximate solution obtained using the competing line search solvers, the CPSLSM and the fminbnd solver. The CPSLSM was carried out using $m = 12, {{\mathcal F}}_{\max} = 100, {\varepsilon}_c = 10^{-15}, \varepsilon = 10^{-10}, k_{\max} = 100$, and the magnitudes of both ${{\mathbf{D}}^{(1)}} \bm{\mathcal{F}}$ and ${{\mathbf{D}}^{(2)}} \bm{\mathcal{F}}$ were considered too small if their values fell below $10^{-1}$. The fminbnd solver was implemented with the termination tolerance `TolX' set at $10^{-10}$. The starting uncertainty intervals $\{I_j\}_{j=1}^{12}$, for the considered test functions are listed in respective order as follows: $I_1 = [0, 10], I_2 = [0, 20], I_3 = [1, 5], I_4 = [0, 5], I_5 = [1, 20], I_6 = [0.5, 5], I_7 = [-10, 10], I_8 = [0, 10], I_9 = [-5, 5], I_{10} = [-2, 2], I_{11} = [0, 10]$, and $I_{12} = [-10, 10]$. 
\begin{table}[H]
\begin{center} 
\scalebox{0.8}{
\resizebox{\textwidth}{!}{ %
\begin{tabular}{ccc}
\toprule
Function & $t^*$ & Optimal function value \\
\cmidrule(r){1-3}
$f_1(t) = t^4  - 8.5\; t^3  - 31.0625\; t^2  - 7.5\; t + 45$ & $8.27846234384512$ & $-2271.58168119200$\\
$f_2(t) = (t + 2)^2 (t + 4)(t + 5)(t + 8)(t - 16)$ & $12.6791200596419$ & $-4.36333999223710 \times 10^6$ \\
$f_3(t) = e^t  - 3\; t^2,\quad t > 0.5$ & $2.83314789204934$ & $-7.08129358237484$\\
$f_4(t) = \cos(t) + (t - 2)^2$ & $2.35424275822278$ & $-0.580237420623167$\\
$f_5(t) = 3774.522/t + 2.27\; t - 181.529,\quad t > 0$ & $40.7772610902992$ & $3.59976534995851$\\
$f_6(t) = 10.2/t + 6.2\; t^3 ,\quad t > 0$ & $0.860541475570675$ & $15.8040029284830$ \\
$f_7(t) = -1/(1+t^2)$ & $0$ & $-1$\\
$f_8(t) = (t-3)^{12} + 3\,t^4$ & $1.82219977424679$ & $40.2016340135967$\\
$f_9(t) = \log \left( {{t^2} + 1} \right) + \cosh (t) + 1$ & $0$ & $2$\\
$f_{10}(t) = \log \left( {\tanh \left( {{t^2}} \right) + {e^{ - {t^2}}}} \right)$ & $0$ & $0$\\
${f_{11}}(t) = {\left( {t - 99} \right)^2}\sinh \left( {1/\left({1 + {t^2}}\right)} \right)$ & $99$ & $0$\\
${f_{12}}(t) = {t^3} + \left( {3.7 + t + {t^2} - {t^3}} \right)\tanh \left( {{{\left( { - 5.5 + t} \right)}^2}} \right)$ & $-0.5$ & $3.45$\\
\bottomrule
\end{tabular}
}}
\caption{The one-dimensional test functions together with their corresponding local minima and optimal values}
\label{sec:numerical:tab:tf1}
\end{center}
\end{table}
Figure \ref{cdn} shows the ${\text{cd}}_n$ obtained using the CPSLSM and the fminbnd solver for each test function. Clearly, the CPSLSM establishes more accuracy than the fminbnd solver in general with the ability to exceed the required precision in the majority of the tests. Moreover, the CPSLSM was able to find the exact local minimum for $f_7$. The experiments conducted on the test functions $f_5$ and $f_{11}$ are even more interesting, because they manifest the adaptivity of the CPSLSM to locate a search interval bracketing the solution when the latter does not lie within the starting uncertainty interval. Moreover, the CPSLSM is able to determine highly accurate approximate solutions even when the starting uncertainty intervals are far away from the desired solutions. The proposed method is therefore recommended as a general purpose line search method. On the other hand, the fminbnd solver was stuck in the starting search intervals, and failed to locate the solutions. For test function $f_6$, we observe a gain in accuracy in favor of the fminbnd solver. Notice though that the approximate solution $\tilde t^* = 0.86053413225062$, obtained using the CPSLSM in this case yields the function value $15.8040029302092$ that is accurate to $9$ significant digits. Both methods yield the same accuracy for the test function $f_8$.

Figure \ref{iter1} further shows the number of iterations $k$ required by both methods to locate the approximate minima given the stated tolerance $\varepsilon$. The figure conspicuously shows the power of the novel optimization scheme observed in the rapid convergence rate, as the CPSLSM requires about half the iterations number required by the fminbnd solver for several test functions. The gap is even much wider for test functions $f_5, f_7, f_9,$ and $f_{10}$. 

\begin{figure}[ht]
\centering
\includegraphics[scale=0.75]{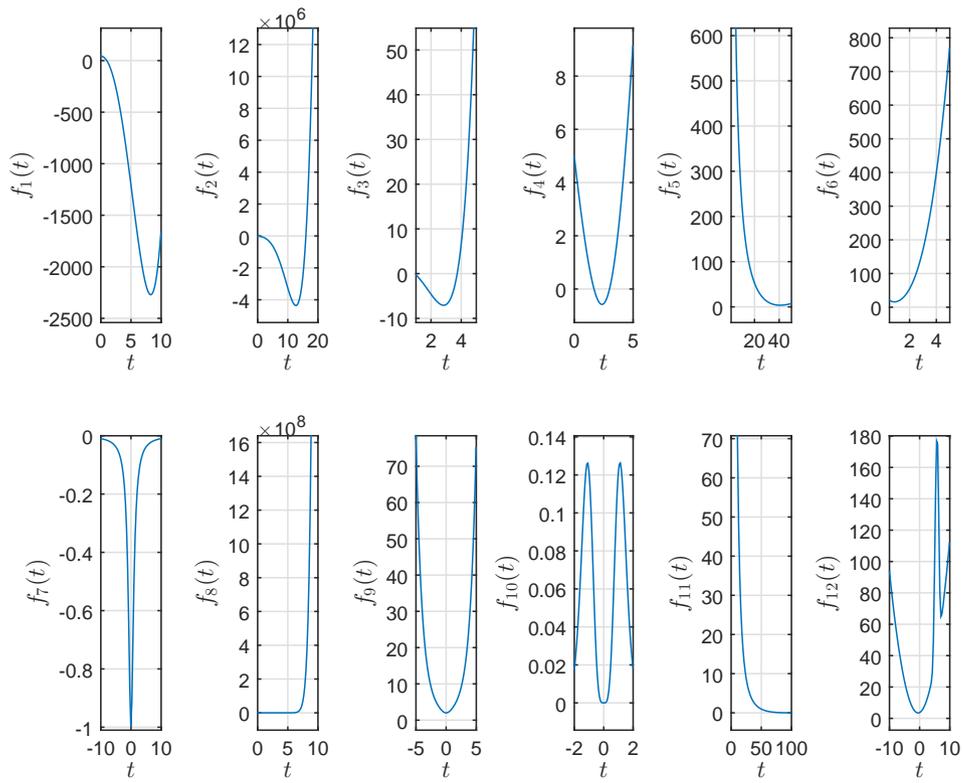}
\caption{The plots of the test functions}
\label{funplots}
\end{figure}
\begin{figure}[ht]
\centering
\includegraphics[scale=0.7]{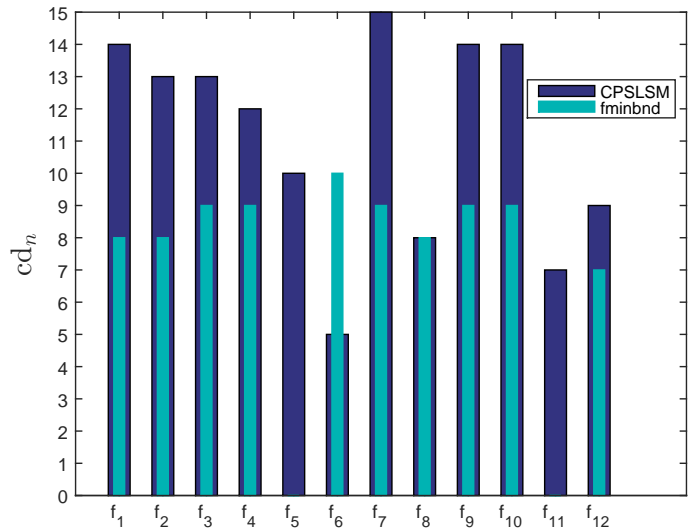}
\caption{The cd$_n$ for the CPSLSM and the fminbnd solver}
\label{cdn}
\end{figure}
\begin{figure}[ht]
\centering
\includegraphics[scale=0.7]{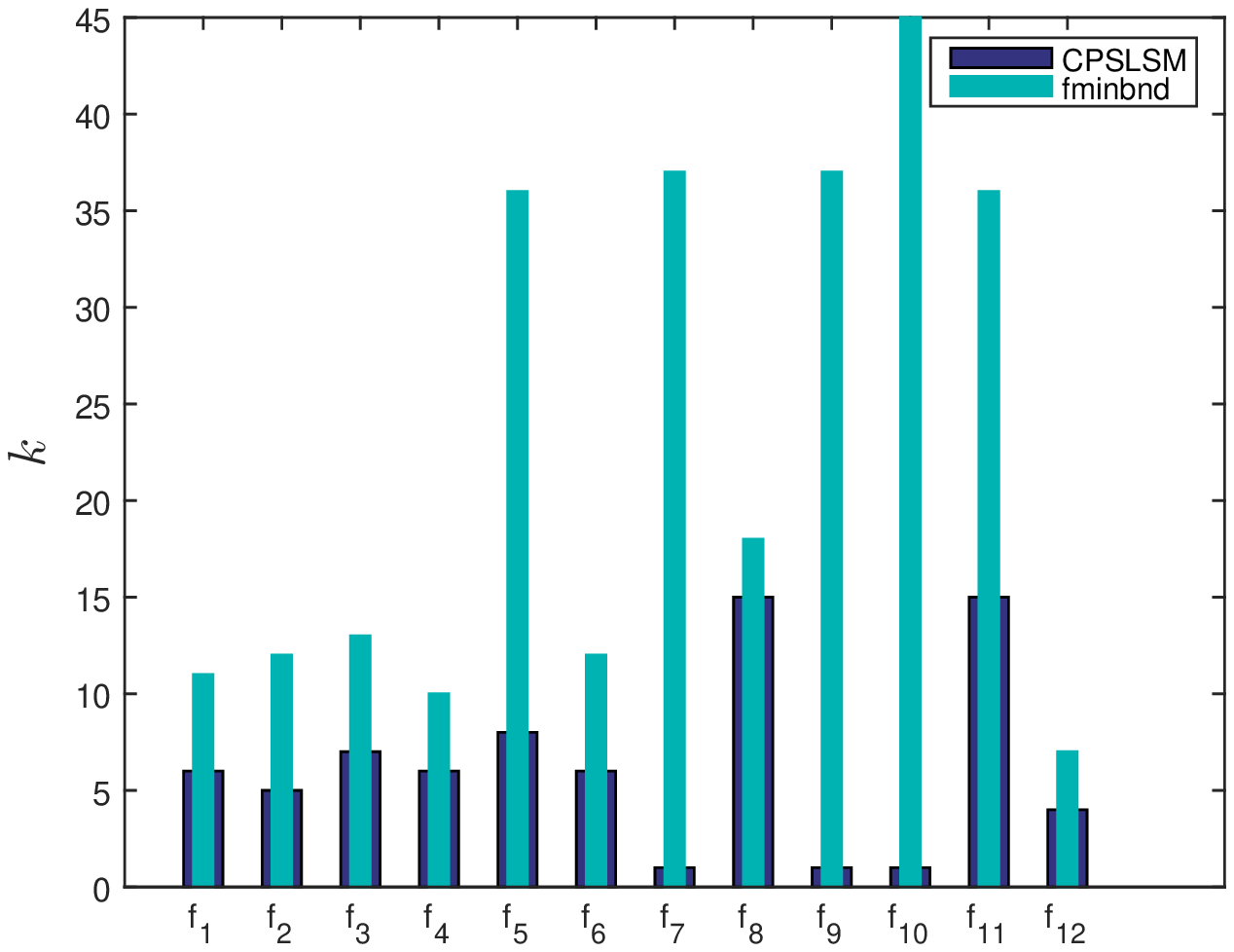}
\caption{The number of iterations required by the CPSLSM and the fminbnd solver}
\label{iter1}
\end{figure}
%

Figures \ref{cdn1} and \ref{iter11storder} show the cd$_n$ and the number of iterations required by the CPSLSM using only first-order information versus the fminbnd solver. Here we notice that the obtained cd$_n$ values using the CPSLSM are almost identical with the values obtained using second-order information with a slight increase in the number of iterations required for some test functions as expected.
\begin{figure}[ht]
\centering
\includegraphics[scale=0.7]{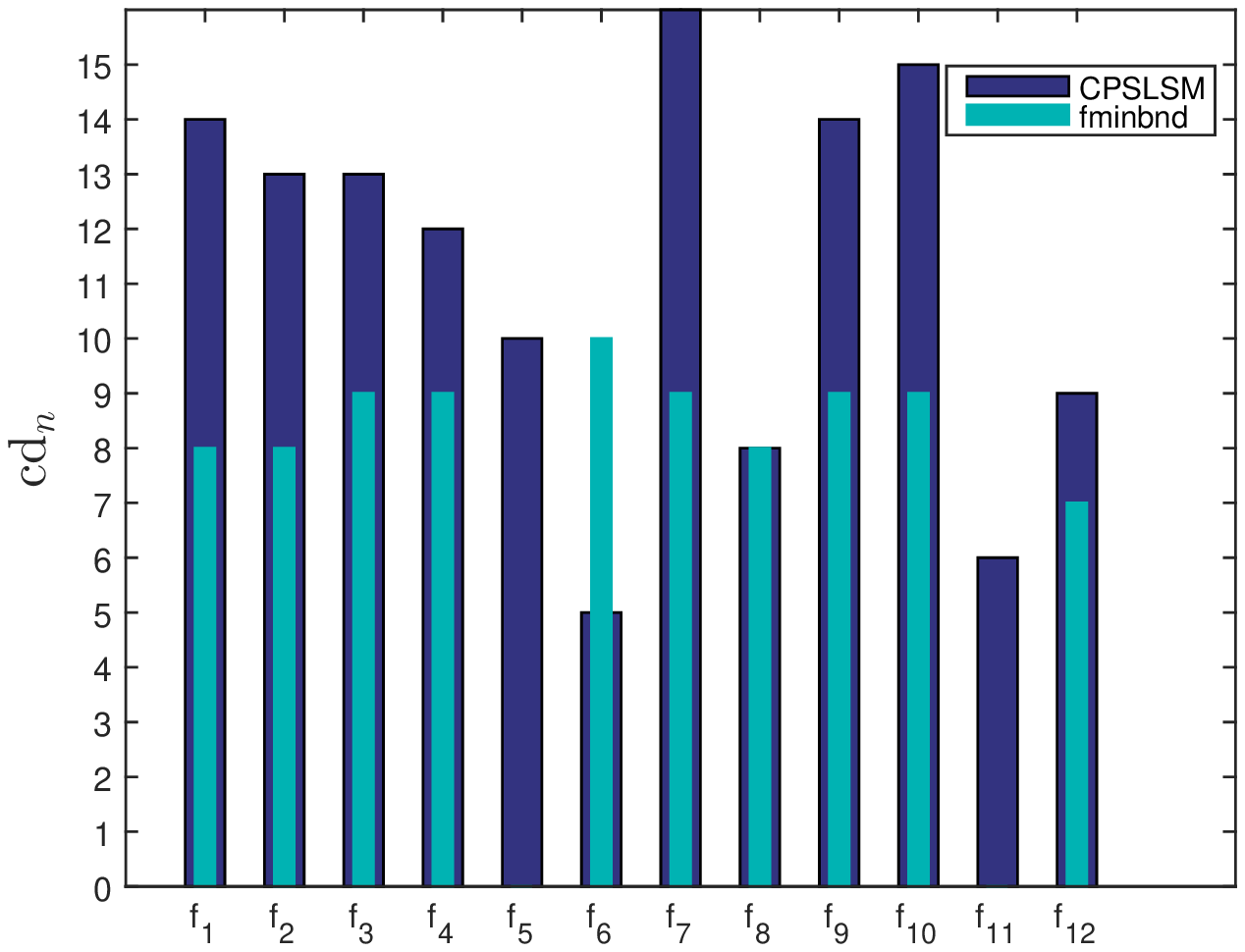}
\caption{The cd$_n$ for the CPSLSM using first-order information and the fminbnd solver}
\label{cdn1}
\end{figure}
\begin{figure}[ht]
\centering
\includegraphics[scale=0.7]{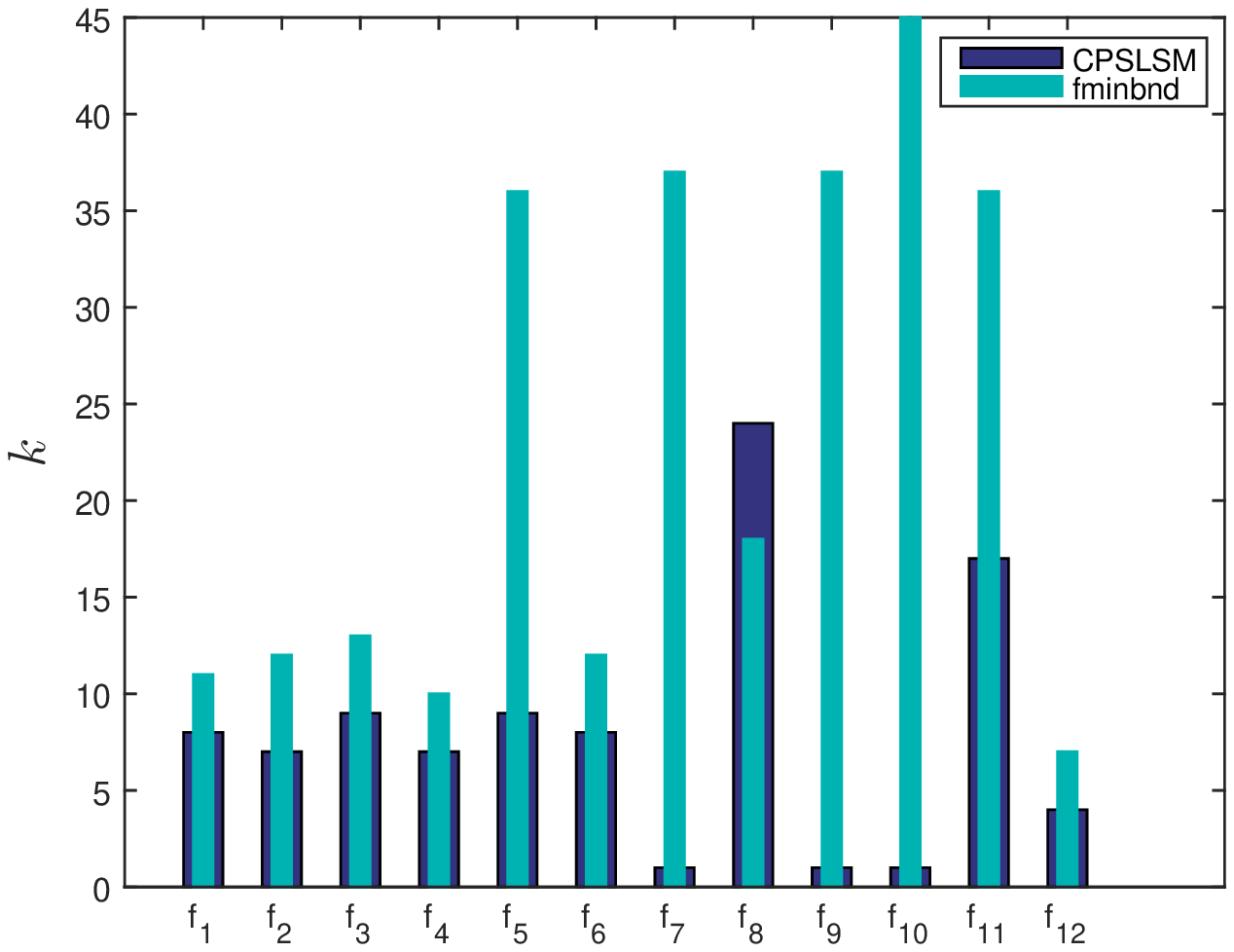}
\caption{The number of iterations required by the CPSLSM using first-order information and the fminbnd solver}
\label{iter11storder}
\end{figure}

\subsection{Multi-Dimensional Optimization Test Problems}
\label{subsec:MDOTP1}
The functions listed below are some of the common functions and data sets used for testing multi-dimensional unconstrained optimization algorithms: 
\begin{itemize}
	\item \textit{Sphere Function}: 
	\[{f_1}(\bm{x}) = \sum\limits_{i = 1}^d {x_i^2},\quad d \in {\mathbb{Z}^ + }.\]
	 The global minimum function value is $f(\bm{x}^*) = 0$, obtained at $\bm{x}^* = [0,\ldots,0]^T$. 
	\item \textit{Bohachevsky Function}: 
	\[{f_2}(\bm{x}) = x_1^2 + 2\,x_2^2 - 0.3\,\cos (3\,\pi {x_1}) - 0.4\,\cos (4\,\pi {x_2}) + 0.7.\]
	 The global minimum function value is $f(\bm{x}^*) = 0$, obtained at $\bm{x}^* = [0,0]^T$. 
	\item \textit{Booth Function}: 
	\[{f_3}(\bm{x}) = {({x_1} + 2\,{x_2} - 7)^2} + {(2\,{x_1} + {x_2} - 5)^2}.\]
	 The global minimum function value is $f(\bm{x}^*) = 0$, obtained at $\bm{x}^* = [1,3]^T$. 
	\item \textit{Three-Hump Camel Function}: 
\[{f_4}(\bm{x}) = 2\,x_1^2 - 1.05\,x_1^4 + x_1^6/6 + {x_1}\,{x_2} + x_2^2.\]
	 The global minimum function value is $f(\bm{x}^*) = 0$, obtained at $\bm{x}^* = [0,0]^T$. 
	\item \textit{Powell Function}: 
\[{f_5}(\bm{x}) = \sum\limits_{i = 1}^{d/4} {\left( {{{({x_{4i - 3}} + 10\,{x_{4i - 2}})}^2} + 5\,{{({x_{4i - 1}} - \,{x_{4i}})}^2} + {{({x_{4i - 2}} - 2\,{x_{4i - 1}})}^4} + 10\,{{({x_{4i - 3}} - \,{x_{4i}})}^4}} \right)},\quad d \in {\mathbb{Z}^ + }. \]
	 The global minimum function value is $f(\bm{x}^*) = 0$, obtained at $\bm{x}^* = [0,\ldots,0]^T$. 
	\item \textit{Goldstein-Price Function}: 
\begin{align*}
{f_6}(\bm{x}) &= \left( {1 + {{({x_1} + {x_2} + 1)}^2}\,\left(19 - 14\,{x_1} + 3\,{x_1}^2 - 14\,{x_2} + 6\,{x_1}\,{x_2} + 3\,x_2^2\right)} \right)\\
 &\times \left( {30 + {{(2\,{x_1} - 3\,{x_2})}^2}\,\left(18 - 32\,{x_1} + 12\,x_1^2 + 48\,{x_2} - 36\,{x_1}\,{x_2} + 27\,x_2^2\right)} \right).
\end{align*}
	 The global minimum function value is $f(\bm{x}^*) = 3$, obtained at $\bm{x}^* = [0,-1]^T$. 
	\item \textit{Styblinski-Tang Function}: 
\[{f_7}(\bm{x}) = \frac{1}{2}\sum\limits_{i = 1}^d {\left( {x_i^4 - 16\,x_i^2 + 5\,{x_i}} \right)},\quad d \in {\mathbb{Z}^ + }.\]
	 The global minimum function value is $f(\bm{x}^*) = -39.16599\,d$, obtained at $\bm{x}^* = [-2.903534,\ldots,-2.903534]^T$. 
	\item \textit{Easom Function}: 
\[{f_8}(\bm{x}) =  - \cos ({x_1})\,\cos ({x_2})\,{e^{ - {{({x_1} - \pi )}^2} - {{({x_2} - \pi )}^2}}}.\]
	 The global minimum function value is $f(\bm{x}^*) = -1$, obtained at $\bm{x}^* = [\pi,\pi]^T$. 
\end{itemize}
Table \ref{sec:numerical:tab:tf2} shows a comparison between the modified BFGS method endowed with MATLAB ``fminbnd'' line search solver (MBFGSFMINBND) and the modified BFGS method integrated with the present CPSLSM (MBFGSCPSLSM) for the multi-dimensional test functions $f_i, i = 1, \ldots, 8$. The modified BFGS method was performed using Algorithm \ref{sec1:alg:MBFGSA1} with $\mathbf{B}_0 = \mathbf{I}, k_{\max} = 10^4$, and $p_{max} = 10$. The gradients of the objective functions were approximated using central difference approximations with step-size $10^{-4}$. Both fminbnd method and CPSLSM were initiated using the uncertainty interval $[\hat \varepsilon,10]$ with $\hat \varepsilon = 3\,\varepsilon_{\text{mach}}$. The maximum number of iterations allowed for each line search method was $100$. Each line search method was considered successful at each iterate $k$ if the change in the approximate step length $\alpha_k$ is below $10^{-6}$. The CPSLSM was carried out using $m = 6, \varepsilon_c = \varepsilon_{\text{mach}}, \varepsilon_D = 10^{-6}$, and ${{\mathcal F}}_{\max} = 100$. Moreover, both MBFGSFMINBND and MBFGSCPSLSM were stopped whenever
\[\left\| {\nabla f\left( {{\bm{x}^{(k)}}} \right)} \right\|_2 < {10^{ - 12}},\]
or
\[\left\| {{\bm{x}^{(k + 1)}} - {\bm{x}^{(k)}}} \right\|_2 < {10^{ - 12}}.\] 
Table \ref{sec:numerical:tab:tf2} clearly manifests the power of the CPSLSM, where the running time and computational cost of the modified BFGS method can be significantly reduced. 
\begin{table}[H]
\begin{center} 
\scalebox{1}{
\resizebox{\textwidth}{!}{ %
\begin{tabular}{ccc}
\toprule
\textbf{Function} & \textbf{MBFGSFMINBND} & \textbf{MBFGSCPSLSM} \\
 $\bm{x}^{(0)}$ & NI/fval/EN/ET & NI/fval/EN/ET\\
 & $\bm{\tilde x}^*$ & $\bm{\tilde x}^*$\\
\cmidrule(r){1-3}
$f_1(\bm{x})\; (d = 4)$ & $13/4.784e-31/6.917e-16/0.021$ & $2/3.3895e-29/5.822e-15/0.011$\\
$[50,1,4,-100]^T$ & $[-3.0928e-16,-6.186e-18,-2.4723e-17,6.1814e-16]^T$ & $[2.6019e-15,5.2024e-17,2.0815e-16,-5.2038e-15]^T$\\\\
$f_1(\bm{x})\; (d = 100)$ & $13/3.8088e-31/6.172e-16/0.047$ & $2/7.3153e-30/2.705e-15/0.013$\\
${[50,1,4,2.5, \ldots ,2.5, - 100]^T}$ & Omitted & Omitted \\\\
$f_2(\bm{x})$ & $412/0.88281/7.766e-01/0.592$ & $16/0.46988/4.695e-01/0.078$ \\
$[10,20]^T$ & $[0.61861,-0.46953]^T$ & $[6.001e-09,0.46953]^T$ \\\\
$f_3(\bm{x})$ & $1/6.3109e-30/1.884e-15/0.002$ & $1/0/0/0.005$ \\
$[2,2]^T$ & $[0.\bar{9},3]^T$ & $[1,3]^T$ \\\\
$f_4(\bm{x})$ & NI exceeded $10000$ (\textbf{Failure}) & $5/1.8396e-32/9.740e-17/0.021$ \\
$[-0.5,1]^T$ & & $[-9.7238e-17,5.6191e-18]^T$ \\\\
$f_5(\bm{x})\; (d = 4)$ & $29/5.4474e-23/2.732e-06/0.051$ & $28/3.6165e-26/4.409e-07/0.141$ \\
$[2,3,1,1]^T$ & $[2.3123e-06,-2.3123e-07,1.0163e-06,1.0163e-06]^T$ & $[3.7073e-07,-3.7073e-08,1.6667e-07,1.6667e-07]^T$ \\\\
$f_6(\bm{x})$ & NI exceeded $10000$ (\textbf{Failure}) & $53/3/9.577e-09/0.291$ \\
$[-0.5, 1]^T$ & & $[5.3617e-09,-1]^T$ \\\\
$f_7(\bm{x})\; (d = 4)$ & $576/-128.39/\text{--}/1.125$ & $11/-128.39/\text{--}/0.052$ \\
$[-4, -4, 5, 5]^T$ & $[-2.9035,-2.9035,2.7468,2.7468]^T$ & $[-2.9035,-2.9035,2.7468,2.7468]^T$ \\\\
$f_7(\bm{x})\; (d = 12)$ & $308/-342.76/\text{--}/0.680$ & $35/-342.76/\text{--}/0.202$ \\
${[3, - 0.5,1.278,1, \ldots 1,0.111,4.5]^T}$ & Omitted & Omitted \\\\
$f_8(\bm{x})$ & NI exceeded $10000$ (\textbf{Failure}) & $3/-1/4.333e-14/0.037$ \\
$[1, 1]^T$ & & $[3.1416,3.1416]^T$ \\
\bottomrule
\end{tabular}
}}
\caption{A comparison between the BFGSFMINBND and the BFGSCPSLSM for the multi-dimensional test functions $f_i, i = 1, \ldots, 8$. ``NI'' denotes the number of iterations required by the modified BFGS algorithm, ``fval'' denotes the approximate minimum function value, ``EN'' denotes the Euclidean norm of the error $\bm{x}^* - \bm{\tilde x}^*$, where $\bm{\tilde x}^*$ is the approximate optimal design vector, and ``ET'' denotes the elapsed time for implementing each  method in seconds. The bar over $9$ is used to indicate that this digit repeats indefinitely}
\label{sec:numerical:tab:tf2}
\end{center}
\end{table}

\section{Conclusion}
\label{Conc}
While there is a widespread belief in the optimization community that an exact line search method is unnecessary, as what we may need is a large step-size which can lead to a sufficient descent in the objective function, the current work largely adheres to the use of adaptive PS exact line searches based on Chebyshev polynomials. The presented CPSLSM is a novel exact line search method that enjoys many useful virtues: (i) The initial guesses of the solution in each new search interval are calculated accurately and efficiently using a high-order Chebyshev PS method. (ii) The function gradient values in each iteration are calculated accurately and efficiently using CPSDMs. On the other hand, typical line search methods in the literature endeavor to approximate such values using finite difference approximations that are highly dependent on the choice of the step-size -- a usual step frequently shared by the optimization community. (iii) The method is adaptive in the sense of locating a search interval bracketing the solution when the latter does not lie within the starting uncertainty interval. (iv) It is able to determine highly accurate approximate solutions even when the starting uncertainty intervals are far away from the desired solution. (v) The accurate approximation to the objective function, quadratic convergence rate to the local minimum, and the relatively inexpensive computation of the derivatives of the function using CPSDMs are some of the many useful features possessed by the method. (vi) The CPSLSM can be efficiently integrated with the current state of the art multivariate optimization methods.  In addition, the presented numerical comparisons with the popular rival Brent's method verify further the effectiveness of the proposed CPSLSM. In general, the CPSLSM is a new competitive method that adds more power to the arsenal of line search methods by significantly reducing the running time and computational cost of multivariate optimization algorithms.
\appendix
\section{Preliminaries}
\label{sec:P1}
In this section, we present some useful results from approximation theory. The first kind Chebyshev polynomial (or simply the Chebyshev polynomial) of degree $n$, $T_n(x)$, is given by the explicit formula
\begin{equation}\label{eq:chebpoly1}
{T_n}(x) = \cos \left( {n\,{{\cos }^{ - 1}}(x)} \right)\;\forall x \in [ - 1,1],
\end{equation}
using trigonometric functions, or in the following explicit polynomial form \cite{Snyder1966}:
\begin{equation}\label{eq:chebpoly12}
{T_n}(x) = \frac{1}{2}\sum\limits_{k = 0}^{\left\lfloor {n/2} \right\rfloor } {T_{n - 2k}^k\,{x^{n - 2k}}} ,
\end{equation}
where
\begin{equation}\label{eq:chebpolycoeff1}
	T_m^k = {2^m}{( - 1)^k}\left\{ {\frac{{m + 2\,k}}{{m + k}}} \right\}\left( \begin{array}{c}
	m + k\\
	k
	\end{array} \right),\quad m,k \ge 0.
\end{equation}
The Chebyshev polynomials can be generated by the three-term recurrence relation
\begin{equation}\label{eq:recur}
T_{n + 1} (x) = 2xT_n (x) - T_{n - 1} (x), \quad n \ge 1,
\end{equation}
starting with ${T_0}(x) = 1$ and ${T_1}(x) = x$. They are orthogonal in the interval $[-1, 1]$ with respect to the weight function $w(x) = \left(1 - x^2\right)^{-1/2}$, and their orthogonality relation is given by
\begin{equation}\label{eq:orthog1}
\left\langle {{T_n},{T_m}} \right\rangle_w  = \int_{ - 1}^1 {{T_n}(x)\,{T_m}(x)\,{{\left(1 - {x^2}\right)}^{ - \frac{1}{2}}}dx = \frac{\pi }{2}{c_n}{\delta _{nm}}} ,
\end{equation}
where $c_0  = 2, c_n  = 1, n \ge 1$, and $\delta _{n m}$ is the Kronecker delta function defined by
\[{\delta _{nm}} = \left\{ {\begin{array}{*{20}{l}}
{1,\quad n = m,}\\
{0,\quad n \ne m.}
\end{array}} \right.\]
The roots (aka Chebyshev-Gauss points) of $T_n(x)$ are given by
\begin{equation}
	{x_k} = \cos \left( {\frac{{2k - 1}}{{2n}}\pi } \right),\quad k = 1, \ldots ,n,
\end{equation}
and the extrema (aka CGL points) are defined by
\begin{equation}\label{eq:app:CGL1}
	{x_k} = \cos \left( {\frac{{k \pi}}{n}} \right),\quad k = 0,1, \ldots ,n.
\end{equation}
The derivative of $T_n(x)$ can be obtained in terms of Chebyshev polynomials as follows \cite{Mason2003}:
\begin{equation}\label{eq:chebp2}
\frac{d}{{dx}}{T_n}(x) = \frac{n}{2}\frac{{{T_{n - 1}}(x) - {T_{n + 1}}(x)}}{{1 - {x^2}}},\quad \left| x \right| \ne 1.
\end{equation}

\cite{Clenshaw1960} showed that a continuous function $f(x)$ with bounded variation on $[-1, 1]$ can be approximated by the truncated series
\begin{equation}\label{eq:clenshaw1}
({P_n}f)(x) = \sum\limits_{k = 0}^n {^{''}}{{a_k}\,{T_k}(x)} ,
\end{equation}
where
\begin{equation}\label{coeff}
{a_k} = \frac{2}{n}\sum\limits_{j = 0}^n {^{''}}{{f_j}\,{T_k}({x_j})} ,\quad n > 0,
\end{equation}
$x_j, j = 0, \ldots, n$, are the CGL points defined by Eq. \eqref{eq:app:CGL1}, $f_j = f(x_j)\, \forall j$, and the summation symbol with double primes denotes a sum with both the first and last terms halved. For a smooth function $f$, the Chebyshev series \eqref{eq:clenshaw1} exhibits exponential convergence faster than any finite power of $1/n$ \cite{Gottlieb1977}. 

%
\section{Pseudocodes of Developed Computational Algorithms}
\label{appendix:PS1}
\begin{algorithm}[ht]
\renewcommand{\thealgorithm}{1}
\caption{The CPSLSM Algorithm Using Second-Order Information}
\label{sec1:alg:CPSLSM2}
\begin{algorithmic}
  \REQUIRE Positive integer number $m$; twice-continuously differentiable nonlinear single-variable objective function $f$; maximum function value ${{\mathcal F}}_{\max}$; uncertainty interval endpoints $a, b$; relatively small positive numbers ${\varepsilon}_{c},\varepsilon_D,\varepsilon$; maximum number of iterations $k_{\max}$.
	\ENSURE The local minimum $t^* \in [a,b]$.
	\STATE{$\rho_1 \leftarrow 1.618033988749895; \rho_2 \leftarrow 2.618033988749895; e^+ = a + b; e^- = b - a; k \leftarrow 0; x_j \leftarrow \cos\left(j \pi/4\right),\quad j = 0,\ldots, 4$.}
	\STATE{Calculate $c_l^{(k)}, l = 0, \ldots ,\left\lfloor {k/2} \right\rfloor, k=0,\ldots,4$ using Eqs. \eqref{eq:clkmain1}; $\{\theta _k,\tilde c_k\}_{k=0}^4$ using Eqs. \eqref{eq:paramtheta1} \& \eqref{eq:Chebdercoeff1}, respectively.}
\IF{$m \ne 4$} \STATE{$x_{D,j} \leftarrow \cos\left(j \pi/m\right),\quad j = 0,\ldots, m$.} \ELSE \STATE{$x_{D,j} \leftarrow x_j ,\quad j = 0,\ldots, 4$.} \ENDIF
\WHILE{$k \le k_{\max}$} 
\STATE{flag $\leftarrow 0; \bm{{{\mathcal F}}_{x}} \leftarrow \left\{ {f\left( {\left( {{e^ - }{x_j} + {e^ + }} \right)/2} \right)} \right\}_{j = 0}^4; {{\mathcal F}}_{x,\max} \leftarrow \|\bm{{{\mathcal F}}_{x}}\|_{\infty}$.
\IF{${{\mathcal F}}_{x,\max} > {{\mathcal F}}_{\max}$} \STATE{$\bm{{{\mathcal F}}_{x}} \leftarrow \bm{{{\mathcal F}}_{x}}/{{\mathcal F}}_{x,\max}$.} \ENDIF
\STATE{Calculate $\left\{{\tilde f}_k, {\tilde f}_k^{(1)}\right\}_{k=0}^4$, using Eqs. \eqref{eq:chebcoeffakin1} and Algorithm \ref{sec:alg1Chebcoeffder1}, and the coefficients $\{A_j\}_{j=1}^2$ using Eqs. \eqref{eqs:subcoeffmaink1a} \& \eqref{eqs:subcoeffmaink1b}, respectively.}
}
\IF{$\left|A_1\right| < {\varepsilon}_{c}$} 
\STATE{Call Algorithm \ref{sec1:alg:loqc}.}
\ELSE 
\STATE{Calculate $\{A_j\}_{j=3}^4$ using Eqs. \eqref{eqs:subcoeffmaink1c} \& \eqref{eqs:subcoeffmaink1d}; $A_{\max} \leftarrow \mathop {\max }\limits_{1 \le j \le 4} \left| {{A_j}} \right|$.\\
\IF{$A_{\max} > 1$} \STATE{$A_j \leftarrow A_j/A_{\max}, \quad j = 1, \ldots, 4$.} \ENDIF
\STATE{Calculate the roots $\{\bar x_i\}_{i=1}^3$.}
\IF{$\bar x_i$ is complex \OR $\left|\bar x_i\right| > 1$ for any $i = 1, 2, 3$} \STATE{$k \leftarrow k + 1$; Call Algorithm \ref{sec1:alg:osgssa1}; $e^+ \leftarrow a + b; \tilde x_1 \leftarrow (2\,\tilde t_1 - e^+)/e^-$; flag $\leftarrow 1$.} \ELSE \STATE{Determine ${\tilde x_1}$ using Eq. \eqref{eq:bestrootkimo11}.} \ENDIF\\
Compute $\bm{\mathcal{F'}}$ and $\bm{\mathcal{F''}}$ at $\tilde x_1$.
\IF{$\bm{\mathcal{F''}} > \varepsilon_{\text{mach}}$} \STATE{Call Algorithm \ref{sec1:alg:ChebyshevNewton}.} \ENDIF
\IF{flag $= 1$} \STATE{\CONTINUE} \ELSE \STATE{Compute $\tilde x_2$ using Eq. \eqref{eq:secbest2}.\\
\IF{$\tilde x_1 > \tilde x_2$} \STATE{$a \leftarrow (e^- \tilde x_2 + e^+)/2$.} \ELSE \STATE{$b \leftarrow (e^- \tilde x_2 + e^+)/2$.} \ENDIF
} \ENDIF\\
$e^+ \leftarrow a + b; e^- \leftarrow b - a; k \leftarrow k + 1$.
}
\ENDIF
\ENDWHILE
\STATE{Output(`Maximum number of iterations exceeded.').}
\RETURN{$t^*$}.
\end{algorithmic}
\end{algorithm}
\begin{algorithm}[ht]
\renewcommand{\thealgorithm}{2}
\caption{Calculating the Chebyshev Coefficients of the Derivative of a Polynomial Interpolant}
\label{sec:alg1Chebcoeffder1}
\begin{algorithmic}
\REQUIRE The Chebyshev coefficients $\{{{\tilde f}_k}\}_{k=0}^4$.
\STATE $\tilde f_{4}^{(1)} \leftarrow 0$.
\STATE $\tilde f_{3}^{(1)} \leftarrow 8\,\tilde f_{4}$.
\STATE{$\tilde f_{k}^{(1)} \leftarrow 2\,(k+1)\,\tilde f_{k+1} + \tilde f_{k+2}^{(1)},\quad k = 2, 1$.}
\STATE{$\tilde f_{0}^{(1)} \leftarrow \tilde f_{1} + \tilde f_{2}^{(1)}/2$.}
\RETURN{$\{\tilde f_{k}^{(1)}\}_{k=0}^4$.}
\end{algorithmic}
\end{algorithm}
\begin{algorithm}[ht]
\renewcommand{\thealgorithm}{3}
\caption{Linear/Quadratic Derivative Interpolant Case}
\label{sec1:alg:loqc}
\begin{algorithmic}
  \REQUIRE $m;f; a; b; e^-; \rho_1; \rho_2; A_2; k; k_{\max}; \{x_{D,j}\}_{j=0}^m; {{\mathcal F}}_{\max}; \varepsilon_c; \varepsilon_D; \varepsilon$.
\IF{$\left|A_2\right| < \varepsilon_c$}
\STATE{Calculate $\{A_j\}_{j=3}^4$ using Eqs. \eqref{eqs:subcoeffmaink1c} \& \eqref{eqs:subcoeffmaink1d}; $\bar x \leftarrow -A_4/A_3$.\\
\IF{$\left|\bar x\right| \le 1$} \STATE {${t^*} \leftarrow \frac{1}{2}\left( {(b - a)\,\bar x + a + b} \right)$; Output($t^*$); Stop.} \ENDIF}
\ENDIF
\STATE{$k \leftarrow k + 1$; Call Algorithm \ref{sec1:alg:osgssa1};\\
 $e^+ \leftarrow a + b; \tilde x_1 \leftarrow (2 \tilde t_1 - e^+)/e^-; \bm{\mathcal{F}} \leftarrow \left\{ {f\left( {\left( {{e^ - }{x_{D,j}} + {e^ + }} \right)/2} \right)} \right\}_{j = 0}^m$; $\mathcal{F}_{1,\max} \leftarrow \|\bm{\mathcal{F}}\|_{\infty}$.\\ 
\IF{$\mathcal{F}_{1,\max} > {{\mathcal F}}_{\max}$} 
\STATE{$\bm{\mathcal{F}} \leftarrow \bm{\mathcal{F}}/\mathcal{F}_{1,\max}$.} \ENDIF\\
Compute $\bm{\mathcal{F}'}$ and $\bm{\mathcal{F}''}$ at $\tilde x_1$ using Eqs. \eqref{eq:for311}--\eqref{eq:der12k1}.\\
\IF{$\bm{\mathcal{F}''} > 0$} \STATE{Call Algorithm \ref{sec1:alg:ChebyshevNewton}.} \ENDIF\\
 \CONTINUE
}
\end{algorithmic}
\end{algorithm}
\begin{algorithm}[ht]
\renewcommand{\thealgorithm}{4}
\caption{One-Step Golden Section Search Algorithm}
\label{sec1:alg:osgssa1}
\begin{algorithmic}
  \REQUIRE $f; a; b; e^-; \rho_1; \rho_2; \varepsilon$.
\STATE{$t_1 \leftarrow a + e^-/\rho_2; t_2 \leftarrow a + e^-/\rho_1$.}
\IF{$f(t_1) < f(t_2)$} \STATE{$b \leftarrow t_2; t_2 \leftarrow t_1; e^- \leftarrow b - a; t_1 \leftarrow a + e^-/\rho_2$.} \ELSE \STATE{$a \leftarrow t_1; t_1 \leftarrow t_2; e^- \leftarrow b - a; t_2 \leftarrow a + e^-/\rho_1$.} \ENDIF
\IF{$f(t_1) < f(t_2)$} \STATE{$\tilde t_1 \leftarrow t_1; b \leftarrow t_2.$} \ELSE \STATE{$\tilde t_1 \leftarrow t_2; a \leftarrow t_1.$} \ENDIF
\STATE{$e^- \leftarrow b - a$.}
\IF{$e^- \le \varepsilon$} \STATE{$t^* \leftarrow \tilde t_1$; Output($t^*$); Stop.} \ELSE \STATE{return.} \ENDIF
\RETURN{$\tilde t_1,a,b,e^-$}.
\end{algorithmic}
\end{algorithm}
\begin{algorithm}[ht]
\renewcommand{\thealgorithm}{5}
\caption{The Chebyshev-Newton Algorithm}
\label{sec1:alg:ChebyshevNewton}
\begin{algorithmic}
  \REQUIRE $m; f; a; b; e^-; e^+; \tilde x_1; k; k_{\max}; \varepsilon_D; \varepsilon; \bm{\mathcal{F}}; \bm{\mathcal{F}'};\bm{\mathcal{F}''}$.
\WHILE{$k \le k_{\max}$} 
\STATE Determine $\tilde x_2$ using Eq. \eqref{eq:increm1}; $k \leftarrow k + 1$.
\IF{Condition \eqref{eq:stopp1k1} is satisfied} 
\STATE{$t^* \leftarrow (e^- \tilde x_2 + e^+)/2$.}
\STATE{Output($t^*$); Stop.} 
\ELSIF{$\left| {{{\tilde x}_2}} \right| > 1$} \STATE{\BREAK} 
\ELSIF{$\left|\bm{\mathcal{F}'}\right| < \varepsilon_D$ \AND $\left|\bm{\mathcal{F}''}\right| < \varepsilon_D$} 
\STATE{\IF{$\tilde x_2 > \tilde x_1$} \STATE{Apply Brent's method on the interval $[(e^- {{\tilde x}_1} + e^+)/2,{b}]$; Output($t^*$); Stop.} \ELSE \STATE{Apply Brent's method on the interval $[a, (e^- {{\tilde x}_1} + e^+)/2]$; Output($t^*$); Stop.} \ENDIF}
\ELSE \STATE{Calculate $\bm{\mathcal{F}'}$ and $\bm{\mathcal{F}''}$ at $\tilde x_2$ using Eqs. \eqref{eq:der12k1}; $\tilde x_1 \leftarrow \tilde x_2$.}
\ENDIF
\ENDWHILE
\RETURN{$k,\tilde x_1$.}
\end{algorithmic}
\end{algorithm}
\begin{algorithm}[ht]
\renewcommand{\thealgorithm}{6}
\caption{Modified BFGS Algorithm With a Line Search Strategy}
\label{sec1:alg:MBFGSA1}
\begin{algorithmic}
  \REQUIRE Objective function $f$; initial guess $\bm{x}^{(0)}$; an approximate Hessian matrix $\mathbf{B}_{0}$; maximum number of iterations $k_{\max}$; maximum direction size ${p_{{\max}}}$.
\STATE $k \leftarrow 0$; calculate $\mathbf{B}_{k}^{-1}$, and the gradient vector $\nabla f\left({\bm{x}^{(k)}}\right)$.
\STATE $\bm{x}^* \leftarrow {\bm{x}^{(k)}}$ if the convergence criterion is satisfied.
\STATE ${\bm{p}_k} \leftarrow  - \nabla f\left({\bm{x}^{(k)}}\right); {p_{{\text{norm}}}} \leftarrow {\left\| {{\bm{p}_k}} \right\|_2}.$ 
\IF{${p_{{\text{norm}}}} > {p_{{\max}}}$} \STATE{${\bm{p}_k} \leftarrow {\bm{p}_k}/{p_{{\text{norm}}}}.$ \COMMENT{Scaling}} \ENDIF
\WHILE{$k \le k_{\max}$} 
\STATE Perform a line search to find an acceptable step-size $\alpha_{k}$ in the direction $\bm{p}_{k}$.
\STATE $\bm{s}_k \leftarrow \alpha_k \bm{p}_{k}$.
\STATE $\bm{x}^{(k+1)}\leftarrow\bm{x}^{(k)}+\alpha_{k} \bm{p}_{k}$. \COMMENT{Update the state vector}
\STATE Calculate $\nabla f\left({\bm{x}^{(k+1)}}\right)$, and set $\bm{x}^* \leftarrow {\bm{x}^{(k+1)}}$ if the convergence criterion is satisfied.
\STATE ${\bm{y}_k} \leftarrow \nabla f\left({\bm{x}^{(k + 1)}}\right) - \nabla f\left({\bm{x}^{(k)}}\right); t \leftarrow \bm{s}_k^T{\bm{y}_k}; \mathbf{T}_1 \leftarrow {\bm{y}_k}\bm{s}_k^T; \mathbf{T}_2 \leftarrow {{\mathbf{B}}_{k}^{ - 1}}{\mathbf{T}_1}$.
\STATE ${\mathbf{B}}_{k + 1}^{ - 1} \leftarrow {\mathbf{B}}_{k}^{ - 1} + {\left( {t + \bm{y}_k^T{{\mathbf{B}}_{k}^{ - 1}}{\bm{y}_k}} \right)\left( {{\bm{s}_k}\bm{s}_k^T} \right)}/t^2 - \left({\mathbf{T}_2 + \mathbf{T}_2^T}\right)/{t}.$
\STATE ${\bm{p}_{k + 1}} \leftarrow - {\mathbf{B}}_{k + 1}^{ - 1}\nabla f\left( {{\bm{x}^{(k + 1)}}} \right); {p_{{\text{norm}}}} \leftarrow {\left\| {{\bm{p}_{k+1}}} \right\|_2}.$ 
\IF{${p_{{\text{norm}}}} > {p_{{\max}}}$} \STATE{${\bm{p}_{k+1}} \leftarrow {\bm{p}_{k+1}}/{p_{{\text{norm}}}}.$ \COMMENT{Scaling}} \ENDIF
\STATE $k \leftarrow k+1$.
\ENDWHILE
\STATE{Output(`Maximum number of iterations exceeded.').}
\RETURN{$\bm{x}^*; f\left(\bm{x}^*\right)$.}
\end{algorithmic}
\end{algorithm}


\end{document}